\title{\bf Explicit real-part estimates for high order derivatives of analytic functions}
\author{\sc{Gershon Kresin} \\ \\
Department of Computer Science and Mathematics,\\ 
Ariel University, Ariel 40700, Israel\\
e-mail: kresin@ariel.ac.il}
\date\ }
\numberwithin{equation}{section}
\newtheorem{lemma}{Lemma}
\newtheorem{theorem}{Theorem}
\newtheorem{proposition}[theorem]{Proposition}
\newtheorem{corollary}{Corollary}
\begin{document}
\maketitle

\vspace{-10mm}
{\bf Abstract.}
The representation for the sharp constant ${\rm K}_{n, p}$ in an estimate of the modulus
of the $n$-th derivative of an analytic function in the upper half-plane ${\mathbb C}_+$ is considered. 
It is assumed that the boundary value of the real part of the function on 
$\partial{\mathbb C}_+$ belongs to  $L^p$. The representation for ${\rm K}_{n, p}$ comprises an optimization 
problem by parameter inside of the integral. This problem is solved for $p=2(m+1)/(2m+1-n)$,
$n\leq 2m+1$, and for some first derivatives of even order in the case $p=\infty$.
The formula for ${\rm K}_{n,\; 2(m+1)/(2m+1-n)}$ contains, for instance, the known expressions 
for ${\rm K}_{2m+1, \infty}$ and ${\rm K}_{m, 2}$ as particular cases.
Also, a two-sided estimate for ${\rm K}_{2m, \infty}$ is derived, which leads to the 
asymptotic formula ${\rm K}_{2m, \infty}=2\big ((2m-1)!!\big )^2/\pi + O\big ( \big ((2m-1)!!\big )^2 /(2m-1)\big )$ 
as $m \rightarrow \infty $. The lower and upper bounds of ${\rm K}_{2m, \infty}$ are compared with its value 
for the cases $m=1, 2, 3, 4$. As applications, some real-part theorems with explicit constants
for high order derivatives of analytic functions in subdomains of complex plane are described. 
\\
\\
{\bf 2010 MSC.} Primary: 30A10; Secondary: 30H10
\\
\\ 
{\bf Keywords:} analytic functions, asymptotic formula, explicit real-part estimates, high order derivatives


\setcounter{equation}{-1}
\setcounter{section}{-1}
\section{Introduction}

In this paper we deal substantially with the coefficient $K_{n,p}(\alpha )$ in the inequality 
\begin{equation} \label{Eq_0.2}
|\Re \{ e^{i \alpha}f^{(n)}(z)\}| \leq {K_{n,p}(\alpha ) \over (\Im z)^{n+{1\over p}}}\;||\Re f||_p\;,
\end{equation}
where $z$ is a point in the half-plane ${\mathbb C}_+=\{ z \in {\mathbb C}: \Im z >0 \}$ . Here $f$ is an
analytic function in ${\mathbb C}_+$ represented by the Schwarz formula
\begin{equation} \label{Eq_0.1}
f(z) = \frac{1}{\pi i}\int _{\infty}^{\infty}{\Re f(\zeta) \over \zeta -z}\;d\zeta 
\end{equation}
and such that the boundary values on $\partial{\mathbb C}_+ $ of the real part of $f$
belong to the space $L^p(-\infty, \infty), 1\leq p<\infty$.

Here and in what follows we adopt the notation $||\Re f||_p$ for
$||\Re f |_{\partial{\mathbb C}_+}||_p$, where $|| \cdot ||_p$ stands for the norm in $L^p(-\infty, \infty)$. 
Note that the value $K_{n,\infty}(\alpha)$ 
is obtained by passage to the limit of $K_{n,p}(\alpha)$ 
as $p\rightarrow\infty$. 

Inequality (\ref{Eq_0.2}) with the best possible coefficient in front of $||\Re f||_p$ was obtained by Kresin and Maz'ya
\cite{KM15}. In \cite{KM15} it was shown that
\begin{equation} \label{Eq_0.1M}
K_{n,p}(\alpha)={n! \over \pi}
\left \{ \int_{-\pi/2}^{\pi/2}
\left |\cos \left ( \alpha -(n+1)\varphi+{n\pi \over 2}\right )\right |^q 
\cos^{(n+1)q-2}\varphi d\varphi \right \}^{1/q},
\end{equation}
where $1/p+1/q=1$. So, the sharp constant ${\rm K}_{n,p}$ in the inequality
\begin{equation} \label{Eq_0.2M}
|f^{(n)}(z)| \leq {{\rm K}_{n,p} \over (\Im z)^{n+{1\over p}}}\;||\Re f||_p
\end{equation}
is given by 
\begin{equation} \label{Eq_0.3M}
{\rm K}_{n,p}=\max_\alpha K_{n,p}(\alpha)\;.
\end{equation} 

Note that inequalities (\ref{Eq_0.2}) and (\ref{Eq_0.2M}) for analytic functions belong
to the class of sharp real-part theorems (see Kresin and Maz'ya \cite{KM} and references there)
which go back to Hadamard's real-part theorem \cite{Ha}. 

The present article extends the topic of papers by Kresin and Maz'ya \cite{KM14,KM15}. 
In \cite{KM14} the explicit formulas for ${\rm K}_{0,p}$ for $p\in[1, \infty)$
and for ${\rm K}_{1,p}$ for $p\in[1, \infty]$ were found. In \cite{KM15}
the case of $n\geq 2$ was considered and the explicit formulas for ${\rm K}_{n, p}$ 
were derived for $n=2m+1, 2, 4$ and $p=\infty$ as well as for arbitrary $n$ and $p=1, 2$. Namely, in \cite{KM15} it was shown that  
\begin{equation} \label{Eq_M1}
{\rm K}_{2m+1,\infty}={2\over \pi}{\big ((2m+1)!! \big )^2 \over 2m+1}\;,\;\;\;\;\;\;\;m=0, 1, 2,...,
\end{equation}
\begin{equation} \label{Eq_M2}
{\rm K}_{2, \infty}={3\sqrt{3} \over 2 \pi}\;,\;\;\;\;\;\;\;\;\;\;\;\;\;\;
{\rm K}_{4, \infty}={3 \over 4 \pi}\big ( 16+5\sqrt{5}\big )\;,
\end{equation}
and
\begin{equation} \label{Eq_M3}
\hspace{-7mm}{\rm K}_{n,1}={n! \over \pi}\;,\;\;\;\;\;\;\;\;\;\;\;\;\;\;\;\;\;\;\;{\rm K}_{n,2}=\sqrt{(2n)! \over 2^{2n+1} \pi}\;.
\end{equation}

\smallskip
In this paper the optimization problem (\ref{Eq_0.3M}) is solved in a series of cases described below. In these cases we obtain the explicit 
formulas for the sharp constant ${\rm K}_{n,p}$. In a complicated case $n=2m, p=\infty$ we prove a two-sided estimate for 
${\rm K}_{2m,\infty}$. In conclusion, some applications of obtained results to estimates of high order derivatives of analytic functions in subdomains of ${\mathbb C}$ are described.

\smallskip
Now we describe the results of the present paper in more detail. Introduction is 
followed by four sections. The first of them is auxiliary. It concerns the integral
\begin{equation} \label{Eq_1.1AB}
Q_{\mu,n,\gamma}(\beta)=\int_{-\pi/2}^{\pi/2}
\big |\cos \big ( \beta -(n+1)\varphi \big )\big |^\gamma 
\cos^{\;\mu}\varphi d\varphi ,
\end{equation}
depending on the parameter $\beta $. We consider the problem on maximum of $Q_{\mu,n,\gamma}(\beta)$ in $\beta$.
In what follows, by ${\mathbb N}$ we mean the set of the natural numbers and by $[a]$ we denote the integer part of the number $a$.
Assuming that $m, n \in \{0 \}\cup{\mathbb N}$, $m\geq n+1$ and $\gamma > 2\left [ {m \over n+1} \right ]-2$, we prove the equality
$$
\max _{\beta}Q_{2m,n,\gamma}(\beta)=Q_{2m,n,\gamma}(0)=\int_{-\pi/2}^{\pi/2}\big |\cos (n+1)\varphi \big |^\gamma \cos^{2m}\varphi d\varphi
$$
and find the last integral. If $m\leq n$ and $\gamma > -1$, it is shown that the function $Q_{2m,n,\gamma}(\beta)$ 
is independent of $\beta$ and its value is given. 

Concretizing result of Section 1 for (\ref{Eq_0.1M}) with $q=2(m+1)/(n+1)$ and $n\leq 2m+1$, in Section 2 we obtain the 
explicit formula for ${\rm K}_{n, 2(m+1)/ (2m+1-n) }$. 
In particular, the coefficient $K_{n, 2(m+1)/2m+1-n}(\alpha)$ is independent of $\alpha$ for the case $m\leq n$ and
\begin{equation} \label{Eq_0.5M}
{\rm K}_{n, {2(m+1)\over 2m+1-n}}={n! \over \pi} \left \{ {(2m-1)!!\over (2m)!!}B \left ({m+1 \over n+1}+{1\over 2}\;, {1\over 2} \right )
\right \}^{ \!\! n+1 \over 2(m+1)},
\end{equation}
where by $B$ is denoted the Beta-function. We note, that the above-mentioned formulas for ${\rm K}_{2m+1,\infty}$ 
and ${\rm K}_{n, 2}$ are particular cases of (\ref{Eq_0.5M}) for $n=2m+1$ and $n=m$, correspondingly. 

Section 3 is devoted to derivatives of even order in the case  $p=\infty$. First, we solve the optimization problem 
(\ref{Eq_0.3M}) with $n=6, 8$ and $p=\infty$, and find the values of the sharp constants  
\begin{equation} \label{Eq_E6M}
{\rm K}_{6, \infty}=
{105 \sqrt{2}\over 4\pi}\!\left (9\cos{\pi \over 28}+3\cos{3\pi \over 28}+ \cos{5\pi \over 28} \right ),
\end{equation}
\begin{equation} \label{Eq_E7M}
{\rm K}_{8, \infty}={315 \over 8\pi}\left \{ 175+9\sqrt{2} \left ( 17\cos{\pi \over 36}+
9\cos{5\pi \over 36}+11\cos{7\pi \over 36} \right ) \right \}\;.
\end{equation}

Further, using the result of Section 1, we obtain the two-sided estimate
\begin{equation} \label{Eq_E5M}
{2\over \pi}\big ((2m-1)!! \big )^2 < {\rm K}_{2m, \infty }< {2m \over 2m-1}\;{2\over \pi}\big ((2m-1)!! \big )^2\;,
\end{equation}
which leads to the asymptotic formula 
$$
{\rm K}_{2m, \infty }={2\over \pi}\big ((2m-1)!! \big )^2+ O\left ({ ((2m-1)!!)^2 \over 2m-1 }\right )
$$
as $m \rightarrow \infty $.

Let us denote by
$$
L_{2m}={2\over \pi}\big ((2m-1)!! \big )^2\;,\;\;\;\;\;\;\;\;\;\;\;\;U_{2m}={2m \over 2m-1}\;{2\over \pi}\big ((2m-1)!! \big )^2
$$
the values of the lower and upper bounds in two-sided estimate (\ref{Eq_E5M}), correspondingly.
We can compare these bounds with the sharp constant in inequality (\ref{Eq_0.2M}) for $n=2, 4, 6, 8$ and $p=\infty$.
Using (\ref{Eq_M2}), (\ref{Eq_E6M}) and (\ref{Eq_E7M}), we get
\begin{eqnarray*}
\hspace{-5mm}& &{L_2 \over K_{2,\infty}}\approx 0.7698\;,\;\;\;\;\;\;{L_4 \over K_{4,\infty}}\approx 0.8830\;,\;\;\;\;\;\;
{L_6 \over K_{6,\infty}}\approx 0.9204\;,\;\;\;\;\;\;{L_8 \over K_{8,\infty}}\approx 0.9396,\\
\hspace{-5mm}& &\\
\hspace{-5mm}& &{U_2 \over K_{2,\infty}}\approx 1.5396\;,\;\;\;\;\;\;{U_4 \over K_{4,\infty}}\approx 1.2141
\;,\;\;\;\;\;\;{U_6 \over K_{6,\infty}}\approx 1.1045\;,\;\;\;\;\;\;{U_8 \over K_{8,\infty}}\approx 1.0738.
\end{eqnarray*}

In concluding Section 4 we collect some real-part estimates with explicit constants in the majorizing part of inequality
for the modulus of derivatives of analytic functions in subdomains of ${\mathbb C}$.

\setcounter{equation}{0}
\section{The main lemma} 

First we prove the following auxiliary assertion. 

\setcounter{theorem}{0}
\begin{lemma} \label{LL_1} Let $m, n \in \{0 \}\cup{\mathbb N}$. If $m\geq n+1$ and $\gamma > 2\left [ {m \over n+1} \right ]-2$,
then
\begin{eqnarray} 
& &\hspace{-13mm}\max _{\beta}Q_{2m,n,\gamma}(\beta)=Q_{2m,n,\gamma}(0)=\int_{-\pi/2}^{\pi/2}
\big |\cos (n+1)\varphi \big |^\gamma 
\cos^{2m}\varphi d\varphi\label{Eq_1.2AB}\\
\hspace{-13mm}& &\nonumber\\
& &\hspace{-13mm}={(2m-1)!! \over (2m)!!}\!B\!\left ({\gamma \!+\!1\over 2}, {1\over 2} \right ) +
{\pi \over 2^{2m+\gamma -1}(\gamma\!+\!1)}\sum_{j=1}^{\left [{m \over n+1} \right ]}
{ \left (\begin{array}{ll} \;\;\;\;\;\;\;2m \\ m-j(n+1)\!\!\! \end{array} \right ) \over B\!\left ({\gamma \over 2}\!+\!j\!+\!1, 
{\gamma \over 2}\!-\!j\!+\!1 \right )}\;,\label{Eq_1.3AB}
\end{eqnarray}
where by $B$ is denoted the Beta-function.

If $m\leq n$ and $\gamma > -1$,then the function $Q_{2m,n,\gamma}(\beta)$ is independent of $\beta$, and it is given by
\begin{equation} \label{Eq_1.4AB}
Q_{2m,n,\gamma}(\beta)=
{(2m-1)!! \over (2m)!!}B\left ({\gamma \!+\!1\over 2}, {1\over 2} \right )\;.
\end{equation}
\end{lemma}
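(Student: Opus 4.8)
The plan is to compute the Fourier expansion of $\big|\cos(n+1)\varphi\big|^\gamma$-type integrands against $\cos^{2m}\varphi$ and to show that shifting the phase $\beta$ can only decrease the integral. First I would reduce the maximization statement to a pointwise comparison. Writing $\psi=(n+1)\varphi$, the quantity $\big|\cos(\beta-(n+1)\varphi)\big|^\gamma$ is, for fixed $\varphi$, a function of $\beta$; the key structural fact I would exploit is that $\cos^{2m}\varphi=\big(\tfrac{1+\cos 2\varphi}{2}\big)^m$ expands as a nonnegative combination $\sum_{k=0}^m c_k\cos 2k\varphi$ with $c_k=2^{-2m}\binom{2m}{m-k}$ (and $c_0$ doubled-counting handled in the usual way), while $\big|\cos t\big|^\gamma$ has the classical Fourier cosine series $\big|\cos t\big|^\gamma=\sum_{\ell\ge 0}a_\ell(\gamma)\cos 2\ell t$ with $a_0(\gamma)=\tfrac1\pi B\!\big(\tfrac{\gamma+1}{2},\tfrac12\big)$ and $a_\ell(\gamma)$ expressible through Beta functions. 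Substituting $t=\beta-(n+1)\varphi$ and integrating term by term over $[-\pi/2,\pi/2]$, the orthogonality of $\{\cos 2k\varphi\}$ kills all cross terms except those where $2k=2\ell(n+1)$, i.e.\ $k=\ell(n+1)$; each surviving term carries a factor $\cos(2\ell\beta)$. Hence $Q_{2m,n,\gamma}(\beta)=A+\sum_{j\ge 1}B_j\cos(2j\beta)$ for explicit constants $A,B_j\ge 0$, where $j$ ranges over those values with $j(n+1)\le m$, i.e.\ $1\le j\le[m/(n+1)]$.

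From this representation the maximization is immediate: since every coefficient $B_j$ is nonnegative (being a product of the nonnegative binomial coefficient $c_{j(n+1)}$ and the Fourier coefficient $a_j(\gamma)$, which is positive precisely under the stated hypothesis $\gamma>2[m/(n+1)]-2$ — this is where the condition enters, guaranteeing $a_j(\gamma)>0$ for all relevant $j$ up to $j=[m/(n+1)]$), the sum $A+\sum_j B_j\cos 2j\beta$ is maximized at $\beta=0$, where every cosine equals $1$. This yields $\max_\beta Q_{2m,n,\gamma}(\beta)=Q_{2m,n,\gamma}(0)$, which is $\int_{-\pi/2}^{\pi/2}\big|\cos(n+1)\varphi\big|^\gamma\cos^{2m}\varphi\,d\varphi$ by the symmetry $\cos(-(n+1)\varphi)=\cos(n+1)\varphi$ and evenness of the integrand. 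To get the closed form \eqref{Eq_1.3AB} I would then identify $A$ with the $\ell=0$ contribution, namely $a_0(\gamma)$ times $\int\cos^{2m}\varphi\,d\varphi=\tfrac{(2m-1)!!}{(2m)!!}\pi$ suitably normalized, giving the leading Beta-function term; and identify $B_j$ explicitly — $B_j = \big(\text{binomial}\big)\cdot a_j(\gamma)\cdot(\text{length of interval})$ — using the known evaluation $a_j(\gamma)=\dfrac{1}{2^{\gamma-1}(\gamma+1)\,B\big(\tfrac\gamma2+j+1,\tfrac\gamma2-j+1\big)}$ (the standard formula for Fourier coefficients of $|\cos t|^\gamma$), after which collecting constants reproduces the stated sum over $j$ from $1$ to $[m/(n+1)]$.

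For the second part, when $m\le n$ the constraint $j(n+1)\le m$ forces $j(n+1)\le m\le n<n+1$, so there is no integer $j\ge 1$ in range; the sum is empty and only the $\beta$-independent term $A$ survives, giving \eqref{Eq_1.4AB}, valid for all $\gamma>-1$ since then merely $a_0(\gamma)$ needs to be finite and positive.

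The main obstacle I anticipate is bookkeeping rather than conceptual: correctly deriving and normalizing the Fourier coefficients $a_\ell(\gamma)$ of $|\cos t|^\gamma$ and matching the combinatorial constant $2^{-2m}\binom{2m}{m-j(n+1)}$, including the factor-of-two conventions at $\ell=0$, so that the final constants line up exactly as in \eqref{Eq_1.3AB}; and verifying rigorously that the hypothesis $\gamma>2[m/(n+1)]-2$ is exactly what is needed to ensure positivity (non-vanishing of the relevant Gamma/Beta factors) of $a_j(\gamma)$ for $1\le j\le[m/(n+1)]$, which is what makes $\beta=0$ the true maximizer rather than merely a critical point. I would handle the term-by-term integration by appealing to absolute convergence of the Fourier series of $|\cos t|^\gamma$ for $\gamma>-1$, which justifies interchanging sum and integral.
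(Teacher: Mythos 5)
Your plan is sound and lands on the same structural identity that drives the paper's proof: $Q_{2m,n,\gamma}(\beta)=A+\sum_{j=1}^{[m/(n+1)]}B_j\cos 2j\beta$, with $B_j$ equal to $2^{2-2m}\binom{2m}{m-j(n+1)}\int_0^{\pi/2}\cos^\gamma\vartheta\,\cos 2j\vartheta\,d\vartheta$, whose positivity under $\gamma>2[m/(n+1)]-2$ follows from the same Beta-function evaluation (Gradshtein--Ryzhik 3.631(9)); your normalization of $a_0,a_j$ and of $c_k$ does reproduce the constants in \eqref{Eq_1.3AB}, and the emptiness of the sum for $m\le n$ gives \eqref{Eq_1.4AB}. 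The difference is in how the surviving harmonics are extracted: the paper expands only the finite factor $\cos^{2m}$ and uses the substitution $\psi=\beta-(n+1)\varphi$, $(n+1)\pi$-periodicity and the geometric-sum identity (terms survive only when $(n+1)\mid l$), so no infinite series ever appears; you expand both $|\cos t|^\gamma$ and $\cos^{2m}\varphi$ and appeal to orthogonality, which obliges you to justify term-by-term integration. That is your one misstep: the Fourier series of $|\cos t|^\gamma$ is \emph{not} absolutely convergent for $-1<\gamma\le 0$ (the coefficients decay only like $\ell^{-(\gamma+1)}$), so your stated justification fails precisely in part of the range needed for the second claim ($m\le n$, $\gamma>-1$); for the first claim the hypotheses force $\gamma>2[m/(n+1)]-2\ge 0$, hence $\gamma>0$, and uniform convergence does hold. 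The repair is immediate and brings you onto the paper's track: expand only the finite trigonometric polynomial $\cos^{2m}\varphi$ and note that each integral $\int_{-\pi/2}^{\pi/2}|\cos(\beta-(n+1)\varphi)|^\gamma\cos 2k\varphi\,d\varphi$ is a Fourier coefficient of a $\pi$-periodic $L^1$ function, computable directly by substitution; with that change your argument is complete and equivalent to the paper's.
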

\begin{proof} 

Making the change of variable $\psi=\beta-(n+1)\varphi$ in (\ref{Eq_1.1AB}) with $\mu=2m$, we obtain
$$
Q_{2m,n,\gamma}(\beta)={1 \over n+1}\int_{\beta-(n+1){\pi\over2}}^{\beta+(n+1){\pi\over2}}
\big |\cos \psi\big |^\gamma\cos^{2m}{\psi-\beta \over n+1}\; d\psi\;.
$$
Since the integrand is $(n+1)\pi$-periodic, it follows that
\begin{eqnarray*}
Q_{2m,n,\gamma}(\beta)&\!\!\!=\!\!\!&{1 \over n+1}\int_{0}^{(n+1)\pi}
\big |\cos \psi\big |^\gamma\cos^{2m}{\psi-\beta \over n+1}\; d\psi\\
& &\\
&\!\!\!=\!\!\!&{1 \over n+1}\sum_{j=0}^n\int_{j\pi}^{(j+1)\pi}
\big |\cos \psi\big |^\gamma\cos^{2m}{\psi-\beta \over n+1}\; d\psi\;.
\end{eqnarray*}
The change of variable $\psi-j\pi=\vartheta$ implies
\begin{equation} \label{Eq_1.5AB}
Q_{2m,n,\gamma}(\beta)={1 \over n+1}\int_0^\pi
\big |\cos \vartheta\big |^\gamma g_{m,n}(\vartheta-\beta)\; d\vartheta\;,
\end{equation}
where
\begin{equation} \label{Eq_1.6AB}
g_{m,n}(\theta)=\sum_{j=0}^n \cos^{2m}{\theta+j\pi \over n+1}\;.
\end{equation}
Since
$$
\cos^{2m}x={1 \over 2^{2m}}\left \{\left (\begin{array}{ll}\!\!\! 2m\!\!\! \\ \hspace{0.1cm}
\!\!\!m\!\!\! \end{array} \right ) + 2 \sum_{k=0}^{m-1}
\left (\begin{array}{ll}\!\!\! 2m\!\!\! \\ \hspace{0.1cm}\!\!\!k\!\!\! \end{array} \right ) 
\cos 2(m-k)x\right \}\;,
$$
we can write (\ref{Eq_1.6AB}) in the form
$$
g_{m,n}(\theta)={n+1 \over 2^{2m}}\left (\begin{array}{ll}\!\!\! 2m\!\!\! \\ 
\hspace{0.1cm}\!\!\!m \!\!\!\end{array} \right )+{1\over 2^{2m-1}}\sum_{k=0}^{m-1}
\left (\begin{array}{ll}\!\!\! 2m\!\!\! \\ \hspace{0.1cm}\!\!\!k \!\!\!\end{array} 
\right )\sum_{j=0}^{n} \cos{2(m-k)(\theta+j\pi) \over n+1}\;.
$$
Putting here $k=m-l\;(l=1,2,\dots, m)$ and taking into account that
$$
{1 \over 2^{2m}}\left (\begin{array}{ll}\!\!\! 2m\!\!\! \\ 
\hspace{0.1cm}\!\!\!m \!\!\!\end{array} \right )={(2m)! \over 2^{2m}(m!)^2}={(2m)! \over \big ( (2m)!!\big )^2}=
{(2m-1)!! \over (2m)!!}\;,
$$
we obtain
\begin{equation} \label{Eq_1.7AB}
g_{m,n}(\theta)={(2m-1)!!(n+1) \over (2m)!!}+{1\over 2^{2m-1}}\sum_{l=1}^{m}
\left (\begin{array}{ll} 2m \\ \hspace{0.1cm}\!\!\!\!\!m-l \!\!\!\end{array} 
\right )\sum_{j=0}^{n} \cos{2 l(\theta+j\pi) \over n+1}\;.
\end{equation} 
Consider the interior sum in (\ref{Eq_1.7AB}). We have
$$
\sum_{j=0}^{n} \cos{2 l(\theta+j\pi) \over n+1}=
\Re \left \{ e^{2l\theta i\over n+1} \sum_{j=0}^n e^{{2l \pi i\over n+1}j} \right \}\;. 
$$
So, for $l\in \{1,\dots, m \}$,
\begin{equation} \label{Eq_1.8AB}
\sum_{j=0}^{n} \cos{2 l(\theta+j\pi) \over n+1}=\Re \left \{ e^{2l\theta i\over n+1}
{1-e^{2l\pi i} \over 1- e^{2l\pi i\over n+1}} \right \}=0\;,
\end{equation}
if $s={l \over n+1}\notin {\mathbb N}$, and
\begin{equation} \label{Eq_1.9AB}
\sum_{j=0}^{n} \cos{2 l(\theta+j\pi) \over n+1}=(n+1)\cos 2s \theta\;,
\end{equation}
if  $s={l \over n+1}\in {\mathbb N}$.

Taking into account (\ref{Eq_1.8AB}) and (\ref{Eq_1.9AB}), we can rewrite (\ref{Eq_1.7AB}) in the form
$$
g_{m,n}(\theta)={(2m-1)!!(n+1) \over (2m)!!}+{n+1 \over 2^{2m-1}}\sum_{s=1}^{\left [{m \over n+1} \right ]}
\left (\begin{array}{ll} \;\;\;\;\;2m \\ \hspace{0.1cm}\!\!\!\!\!m-s(n+1) \!\!\!\end{array} 
\right )\cos 2s \theta\;.
$$ 
Combining this with (\ref{Eq_1.5AB}), we obtain
\begin{eqnarray*}  
& &Q_{2m,n,\gamma}(\beta)={(2m-1)!! \over (2m)!!}\int_0^\pi
\big |\cos \vartheta\big |^\gamma\; d\vartheta\\
& &\\
& &+{1 \over 2^{2m-1}}\sum_{s=1}^{\left [{m \over n+1} \right ]}
\left (\begin{array}{ll} \;\;\;\;\;2m \\ \hspace{0.1cm}\!\!\!\!\!m-s(n+1) \!\!\!\end{array} 
\right )\int_0^\pi
\big |\cos \vartheta\big |^\gamma\cos 2s (\vartheta -\beta)\; d\vartheta\;.
\end{eqnarray*}
Since the integrands in the last equality are $\pi$-periodic, it follows that
\begin{eqnarray*}
\hspace{-12mm}& &Q_{2m,n,\gamma}(\beta)={(2m-1)!! \over (2m)!!}\int_{-\pi/2}^{\pi/2}
\cos ^\gamma \vartheta \; d\vartheta\\
\hspace{-12mm}& &\\
\hspace{-12mm}& &+{1 \over 2^{2m-1}}\sum_{s=1}^{\left [{m \over n+1} \right ]}
\left (\begin{array}{ll} \;\;\;\;\;2m \\ \hspace{0.1cm}\!\!\!\!\!m-s(n+1) \!\!\!\end{array} 
\right )\int_{-\pi/2}^{\pi/2}
\cos ^\gamma\vartheta \cos 2s (\vartheta -\beta)\; d\vartheta\;,
\end{eqnarray*}
that is
\begin{eqnarray} \label{Eq_1.10AB}
\hspace{-12mm}& &Q_{2m,n,\gamma}(\beta)={2(2m-1)!! \over (2m)!!}\int_{0}^{\pi/2}
\cos ^\gamma \vartheta
\; d\vartheta\nonumber\\
\hspace{-12mm}& &\nonumber\\
\hspace{-12mm}& &+{1 \over 2^{2(m-1)}}\sum_{s=1}^{\left [{m \over n+1} \right ]}
\left (\begin{array}{ll} \;\;\;\;\;2m \\ \hspace{0.1cm}\!\!\!\!\!m-s(n+1) \!\!\!\end{array} 
\right )\cos 2s \beta \int_{0}^{\pi/2}
\cos ^\gamma \vartheta
\cos 2s\vartheta \; d\vartheta\;.
\end{eqnarray}
Let $m\geq n+1$. Taking into account the formula (see, e.g., Gradshtein and Ryzhik \cite{GRJ}, {\bf 3.631(9)})
\begin{equation} \label{Eq_1.11AB}
\int_0^{\pi/2}\cos^{\nu -1}x \cos ax dx={\pi \over 2^\nu \nu B\left ({\nu+a+1 \over 2}, {\nu-a+1 \over 2}\right )}\;,
\end{equation}
where $\Re \;\nu >0$, and the condition $\gamma > 2\left [ {m \over n+1} \right ]-2$ of the present lemma, we conclude that
$$
\int_{0}^{\pi/2}\cos ^\gamma\vartheta \cos 2s\vartheta \; d\vartheta >0
$$
for any $s \in \left \{1, 2, \dots, \left [{m \over n+1} \right ] \right \}$. This and (\ref{Eq_1.10AB}) imply that
the maximum of $Q_{2m,n,\gamma}(\beta)$ in $\beta$ is attained at $\beta=0$. Hence, by (\ref{Eq_1.1AB}) we obtain (\ref{Eq_1.2AB}).
Calculating by (\ref{Eq_1.11AB}) the integrals in (\ref{Eq_1.10AB}), we arrive at (\ref{Eq_1.3AB}).

The sum in (\ref{Eq_1.10AB}) vanishes in the case $m\leq n$. Hence, the function $Q_{2m,n,\gamma}(\beta)$ 
is independent of $\beta$ under condition $m\leq n$, which proves (\ref{Eq_1.4AB}).
\end{proof}

\setcounter{equation}{0}
\section{Sharp estimates for derivatives of analytic 
functions with $\Re f \in h^p({\mathbb R}^2_+)$, $p=2(m+1)/ (2m+1-n)$}
 
In what follows, by $h^p({\mathbb R}^2_+), 1\leq p\leq\infty$, we mean
the Hardy space of harmonic functions in the upper half-plane ${\mathbb R}^2_+$ 
which are represented by the Poisson integral  with a density in $L^p(-\infty, \infty)$.
It is well known (see, e.g. Levin \cite{Le}, Sect. 19.3) that $f$ belongs to the Hardy
space $H^p({\mathbb C}_+)$ of analytic functions in ${\mathbb C}_+$ if $\Re f \in 
h^p({\mathbb R}^2_+), 1 < p<\infty$. 
Besides, any function $f\in H^p({\mathbb C}_+), 1<p<\infty$, admits the representation
(\ref{Eq_0.1}) since $\Re f\in h^p({\mathbb R}^2_+)$.
     
Now we consider the case $p=2(m+1)/ (2m+1-n)$ in inequality (\ref{Eq_0.2M}), that is $q=2(m+1)/ (n+1)$. We suppose that
$n\leq 2m+1$, $n\geq 1$. In the case $n= 2m+1$ we put $p=\infty$.

\setcounter{theorem}{0}
\begin{theorem} \label{PP_1}
Let $\Re f \in h^p({\mathbb R}^2_{+})$ with $p=2(m+1)/ (2m+1-n)$, and let  $z $ be  an arbitrary point in 
${\mathbb C}_+$. The sharp constant ${\rm K}_{n, 2(m+1)/ (2m+1-n)}$ in the inequality
\begin{equation} \label{Eq_3.1}
|f^{(n)}(z)| \leq {{\rm K}_{n,\; 2(m+1)/ (2m+1-n)} \over (\Im z)^{n+{1\over p}}}\;||\Re f||_p
\end{equation}
is given by
\begin{equation} \label{Eq_3.2}
{\rm K}_{n, {2(m+1)\over 2m+1-n} }\!=\!{n! \over \pi}\left \{ \int_{-\pi/2}^{\pi/2}
\big |\cos (n+1)\varphi \big |^{2(m+1) \over n+1} 
\cos^{2m}\varphi d\varphi \right \} ^{n+1 \over 2(m+1)}
\end{equation}
\begin{eqnarray}
& &\hspace{-8mm}\!=\!{n!\over \pi}  \left \{ {(2m-1)!! \over (2m)!!}\;B\!\left ({m+1 \over n+1}+{1\over 2}, {1\over 2} \right )\right .\nonumber\\
& &\hspace{-8mm}\label{Eq_3.3}\\
& &\hspace{-8mm}\left . +{\pi (n+1) \over 2^{2m-1+{2(m+1) \over n+1}}\big ( 2m+n+3\big )}\sum_{j=1}^{\left [{m \over n+1} \right ]}
{ \left (\begin{array}{ll} \;\;\;\;\;\;\;2m \\ m-j(n+1)\!\!\! \end{array} \right ) \over B\!\left ({m+1 \over n+1}+j+1, {m+1 \over n+1}-j+1 \right )}\right \}^{ \!\! n+1 \over 2(m+1)}\!\!\!\!\!\!.\nonumber
\end{eqnarray}

In particular, 
\begin{equation} \label{Eq_3.4}
{\rm K}_{n, {2(m+1)\over 2m+1-n}}={n! \over \pi} \left \{ {(2m-1)!!\over (2m)!!}B \left ({m+1 \over n+1}+{1\over 2}\;, {1\over 2} \right )
\right \}^{ \!\! n+1 \over 2(m+1)}
\end{equation}
for $m\leq n$.
\end{theorem}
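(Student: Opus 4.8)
The plan is to specialize the Kresin--Maz'ya representation (\ref{Eq_0.1M})--(\ref{Eq_0.3M}) to the exponent $q = 2(m+1)/(n+1)$ and then apply Lemma~\ref{LL_1}. By that representation the sharp constant in (\ref{Eq_3.1}) equals $\max_\alpha K_{n,p}(\alpha)$ with $K_{n,p}(\alpha)$ given by (\ref{Eq_0.1M}), so the whole task is to evaluate this maximum for the indicated value of $p$. First I would note that with $q = 2(m+1)/(n+1)$ the exponent of $\cos\varphi$ in (\ref{Eq_0.1M}) becomes $(n+1)q - 2 = 2(m+1) - 2 = 2m$, an even non-negative integer, so that, writing $\gamma := 2(m+1)/(n+1)$,
$$
K_{n,p}(\alpha) = \frac{n!}{\pi}\left\{\int_{-\pi/2}^{\pi/2}\left|\cos\left(\alpha - (n+1)\varphi + \frac{n\pi}{2}\right)\right|^{\gamma}\cos^{2m}\varphi\,d\varphi\right\}^{1/\gamma}.
$$
Substituting $\beta = \alpha + n\pi/2$ and using $|\cos(-x)| = |\cos x|$, the braced quantity is precisely the integral $Q_{2m,n,\gamma}(\beta)$ of (\ref{Eq_1.1AB}), hence $K_{n,p}(\alpha) = (n!/\pi)\{Q_{2m,n,\gamma}(\beta)\}^{1/\gamma}$. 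Since $\gamma > 0$, the map $t\mapsto t^{1/\gamma}$ is increasing and $K_{n,p}(\alpha)$ is a continuous periodic function of $\alpha$, so maximizing $K_{n,p}$ over $\alpha$ reduces to maximizing $Q_{2m,n,\gamma}(\beta)$ over $\beta$.

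Next I would check the hypotheses of Lemma~\ref{LL_1}. In the range $m\ge n+1$ the required condition is $\gamma > 2\left[\frac{m}{n+1}\right] - 2$; it holds because $\left[\frac{m}{n+1}\right]\le \frac{m}{n+1}$, whence
$$
2\left[\frac{m}{n+1}\right] - 2 \le \frac{2m}{n+1} - 2 < \frac{2m}{n+1} + \frac{2}{n+1} = \frac{2(m+1)}{n+1} = \gamma .
$$
In the range $m\le n$ one has $\left[\frac{m}{n+1}\right] = 0$ and $\gamma > 0 > -1$, so the second alternative of the lemma applies, and $Q_{2m,n,\gamma}(\beta)$, hence also $K_{n,p}(\alpha)$, is independent of $\beta$ (equivalently, of $\alpha$). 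Invoking (\ref{Eq_1.2AB})--(\ref{Eq_1.3AB}) when $m\ge n+1$ and (\ref{Eq_1.4AB}) when $m\le n$ gives $\max_\beta Q_{2m,n,\gamma}(\beta) = Q_{2m,n,\gamma}(0)$ in closed form; raising it to the power $1/\gamma = (n+1)/(2(m+1))$ and multiplying by $n!/\pi$ produces (\ref{Eq_3.2}).

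It then remains to insert $\gamma = 2(m+1)/(n+1)$ into the closed form (\ref{Eq_1.3AB}). The elementary identities $(\gamma+1)/2 = (m+1)/(n+1) + 1/2$, $\gamma/2 \pm j + 1 = (m+1)/(n+1) \pm j + 1$, and
$$
2^{2m+\gamma-1}(\gamma+1) = 2^{2m-1+2(m+1)/(n+1)}\,\frac{2(m+1)+(n+1)}{n+1} = 2^{2m-1+2(m+1)/(n+1)}\,\frac{2m+n+3}{n+1}
$$
turn (\ref{Eq_1.3AB}) into the bracketed expression of (\ref{Eq_3.3}); when $m\le n$ the sum over $j$ is empty (again $\left[\frac{m}{n+1}\right]=0$), so (\ref{Eq_3.3}) collapses to (\ref{Eq_3.4}). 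Finally the endpoint $n = 2m+1$, where $q=1$, $p=\infty$, $\gamma = 1$ and $m\le n$, is covered by the same computation together with the passage to the limit $K_{n,\infty}(\alpha) = \lim_{p\to\infty}K_{n,p}(\alpha)$ recorded in the Introduction.

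I do not expect any genuine obstacle here: Lemma~\ref{LL_1} carries the analytic weight, and the only points demanding care are the verification of the inequality $\gamma > 2\left[\frac{m}{n+1}\right] - 2$ and the bookkeeping of the power of $2$ and of the factor $2m+n+3$ in passing from (\ref{Eq_1.3AB}) to (\ref{Eq_3.3}). As a consistency check one may specialize: taking $n = 2m+1$ (so $(m+1)/(n+1) = 1/2$) recovers formula (\ref{Eq_M1}) for ${\rm K}_{2m+1,\infty}$, and taking $n = m$ (so $(m+1)/(n+1) = 1$) recovers the second formula in (\ref{Eq_M3}) for ${\rm K}_{m,2}$, in both cases because the relevant Beta-functions then become elementary.
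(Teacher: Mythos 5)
Your proposal is correct and follows essentially the same route as the paper: specialize (\ref{Eq_0.1M})--(\ref{Eq_0.3M}) to $q=\gamma=2(m+1)/(n+1)$ so that the braced integral becomes $Q_{2m,n,\gamma}(\alpha+n\pi/2)$, verify $\gamma>2\left[\frac{m}{n+1}\right]-2$ (equivalently $\frac{m+1}{n+1}>\left[\frac{m}{n+1}\right]$, exactly the paper's check), and apply Lemma~\ref{LL_1}. Your explicit bookkeeping of $2^{2m+\gamma-1}(\gamma+1)=2^{2m-1+2(m+1)/(n+1)}(2m+n+3)/(n+1)$ and the empty-sum observation for $m\le n$ just spell out what the paper leaves implicit in ``applying Lemma~\ref{LL_1} \dots we complete the proof.''
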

\begin{proof} Putting $q=2(m+1)/ (n+1)$ in (\ref{Eq_0.1M})-(\ref{Eq_0.3M}) and $\gamma=2(m+1)/ (n+1)$, $\mu=2m$ in (\ref{Eq_1.1AB}), we can write
the sharp constant ${\rm K}_{n,p}$ in inequality (\ref{Eq_3.1}) as follows 
\begin{eqnarray} \label{Eq_3.5}
\!{\rm K}_{n,{2(m+1)\over 2m+1-n}}\!&=&\!{n! \over \pi}\max_{\alpha}\left \{ Q_{2m,n, {2(m+1)\over n+1}}
\left (\alpha +{n\pi \over 2}\right ) \right \}^{n+1 \over 2(m+1)}\nonumber\\
& &\nonumber\\
\!& =&\!{n! \over \pi}\max _{\beta}\big \{ Q_{2m,n,{2(m+1)\over n+1}}\left (\beta \right ) \big \}^{n+1 \over 2(m+1)}.
\end{eqnarray}
For $\gamma=2(m+1)/ (n+1)$ we have
$$
{\gamma \over 2}+1={m+1 \over n+1}+1> \left [ {m \over n+1}\right ]+1 
$$
that is, the condition 
$$
\gamma > 2\left [ {m \over n+1}\right ]-2
$$
of Lemma \ref{LL_1} is satisfied. Applying Lemma \ref{LL_1} to (\ref{Eq_3.5}), we complete the proof.
\end{proof}

Two following consequences of Theorem \ref{PP_1} contain explicit formulas for ${\rm K}_{n,p}$ in particular cases.

\setcounter{theorem}{0}
\begin{corollary} \label{CR_1}  If $n=2m$, $m \in {\mathbb N}$, then
\begin{equation} \label{Eq_3.6}
{\rm K}_{2m, 2m+2}={(2m)!\over \pi} \left \{ {\sqrt{\pi}(2m-1)!!\;\Gamma \left ({m+1 \over 2m+1}+{1\over 2}\right ) 
\over (2m)!!\Gamma \left ({m+1 \over 2m+1}+1 \right )}\right \}^{2m+1 \over 2(m+1)}\;.
\end{equation}
\end{corollary}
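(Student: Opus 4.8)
The plan is to specialize Theorem~\ref{PP_1} to the case $n=2m$ and rewrite the Beta-function in terms of Gamma-functions. First I would check that $n=2m$ satisfies the hypothesis $n\le 2m+1$ of Theorem~\ref{PP_1}; indeed $2m\le 2m+1$, so the theorem applies with $p=2(m+1)/(2m+1-n)=2(m+1)/1=2(m+1)=2m+2$, which is exactly the constant ${\rm K}_{2m,\,2m+2}$ appearing in the statement. So the corollary is just the instance $n=2m$ of formula \eqref{Eq_3.3} (equivalently \eqref{Eq_3.2}).

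Next I would simplify the index of summation. For $n=2m$ we have $\left[\dfrac{m}{n+1}\right]=\left[\dfrac{m}{2m+1}\right]=0$ whenever $m\ge 1$, so the sum $\sum_{j=1}^{[m/(n+1)]}$ in \eqref{Eq_3.3} is empty and only the leading term survives. Hence from \eqref{Eq_3.3} with $n=2m$,
\begin{equation*}
{\rm K}_{2m,\,2m+2}=\frac{(2m)!}{\pi}\left\{\frac{(2m-1)!!}{(2m)!!}\,B\!\left(\frac{m+1}{2m+1}+\frac12,\;\frac12\right)\right\}^{\frac{2m+1}{2(m+1)}},
\end{equation*}
which is also immediate from \eqref{Eq_3.4} once one observes that $m\le n=2m$ holds for all $m\in{\mathbb N}$, so the "$m\le n$" branch of Theorem~\ref{PP_1} is in force.

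Finally I would convert the Beta-function into Gamma-functions via $B(x,y)=\Gamma(x)\Gamma(y)/\Gamma(x+y)$. Taking $x=\dfrac{m+1}{2m+1}+\dfrac12$ and $y=\dfrac12$, and using $\Gamma(1/2)=\sqrt{\pi}$, we get
\begin{equation*}
B\!\left(\frac{m+1}{2m+1}+\frac12,\;\frac12\right)=\frac{\sqrt{\pi}\,\Gamma\!\left(\frac{m+1}{2m+1}+\frac12\right)}{\Gamma\!\left(\frac{m+1}{2m+1}+1\right)},
\end{equation*}
since $x+y=\dfrac{m+1}{2m+1}+1$. Substituting this into the displayed formula for ${\rm K}_{2m,\,2m+2}$ yields \eqref{Eq_3.6} verbatim, completing the proof. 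There is essentially no obstacle here: the only point requiring a moment's care is verifying that the binomial sum in \eqref{Eq_3.3} is empty for $n=2m$, i.e. that $[m/(2m+1)]=0$ for every $m\ge 1$, which is clear since $0<m/(2m+1)<1$.

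\begin{proof}
This is the specialization of Theorem~\ref{PP_1} to $n=2m$, $m\in{\mathbb N}$. In this case $p=2(m+1)/(2m+1-n)=2m+2$, and since $m\le n=2m$, formula \eqref{Eq_3.4} applies (equivalently, in \eqref{Eq_3.3} one has $\left[\frac{m}{n+1}\right]=\left[\frac{m}{2m+1}\right]=0$, so the sum is empty). Hence
\begin{equation*}
{\rm K}_{2m,\,2m+2}=\frac{(2m)!}{\pi}\left\{\frac{(2m-1)!!}{(2m)!!}\,B\!\left(\frac{m+1}{2m+1}+\frac12,\;\frac12\right)\right\}^{\frac{2m+1}{2(m+1)}}.
\end{equation*}
Writing the Beta-function via Gamma-functions, $B(x,y)=\Gamma(x)\Gamma(y)/\Gamma(x+y)$, with $x=\frac{m+1}{2m+1}+\frac12$, $y=\frac12$, and using $\Gamma(1/2)=\sqrt{\pi}$, we obtain
\begin{equation*}
B\!\left(\frac{m+1}{2m+1}+\frac12,\;\frac12\right)=\frac{\sqrt{\pi}\,\Gamma\!\left(\frac{m+1}{2m+1}+\frac12\right)}{\Gamma\!\left(\frac{m+1}{2m+1}+1\right)}.
\end{equation*}
Substituting this expression into the formula above gives \eqref{Eq_3.6}.
\end{proof}
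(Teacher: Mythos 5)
Your proposal is correct and matches the paper's intended argument: the corollary is stated there as a direct consequence of Theorem \ref{PP_1}, obtained exactly as you do by setting $n=2m$ (so $p=2m+2$, the sum in \eqref{Eq_3.3} is empty since $[m/(2m+1)]=0$, i.e.\ formula \eqref{Eq_3.4} applies) and rewriting $B\bigl(\tfrac{m+1}{2m+1}+\tfrac12,\tfrac12\bigr)$ via Gamma-functions with $\Gamma(1/2)=\sqrt{\pi}$. No gaps; the exponent $\tfrac{2m+1}{2(m+1)}$ and all substitutions check out.
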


\begin{corollary} \label{CR_3}  If $m=k(n+1)-1$, $k \in {\mathbb N}$, then
\begin{eqnarray*} 
{\rm K}_{n,{2k \over 2k-1}}&\!\!\!=\!\!\!&
{n!\over \pi} \left \{ {\sqrt{\pi}\big (2k(n+1)-3 \big )!!\;\Gamma\!\left (k+{1\over 2}\right ) \over \big (2k(n+1)-2 \big )!!k!}\right . \\
& &\\
&\!\!\!+\!\!\!&\left .{\pi \over 2^{2k(n+2)-3}}\sum_{j=1}^{k-1}
\left (\begin{array}{ll} \!\!2k(n+1)-2 \\ (k-j)(n+1)-1 \!\!\end{array} \right )
\left (\begin{array}{ll} \;\;2k \\k-j\!\! \end{array} \right ) \right \}^{ 1 \over 2k}\!\!\!\!\!\!.
\end{eqnarray*}
\end{corollary}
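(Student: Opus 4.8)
The plan is to specialize Theorem~\ref{PP_1} to the value $m = k(n+1)-1$ and simplify the resulting closed-form expression. Since $m+1 = k(n+1)$, we have $\tfrac{m+1}{n+1} = k$, so the index $j$ in the sum of \eqref{Eq_3.3} runs over $\left[\tfrac{m}{n+1}\right] = \left[k - \tfrac{1}{n+1}\right] = k-1$ values, namely $j = 1, \dots, k-1$, matching the claimed range. The exponent $\tfrac{n+1}{2(m+1)}$ becomes $\tfrac{n+1}{2k(n+1)} = \tfrac{1}{2k}$, which is the outer power in the statement. So the skeleton of the identity is immediate; what remains is to rewrite each of the two pieces inside the braces.

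First I would handle the leading term. In \eqref{Eq_3.3} it reads $\tfrac{(2m-1)!!}{(2m)!!}\,B\!\left(\tfrac{m+1}{n+1}+\tfrac12, \tfrac12\right)$. With $\tfrac{m+1}{n+1} = k$ and $2m = 2k(n+1)-2$, this is $\tfrac{(2k(n+1)-3)!!}{(2k(n+1)-2)!!}\,B\!\left(k+\tfrac12, \tfrac12\right)$. Using $B(x,y) = \Gamma(x)\Gamma(y)/\Gamma(x+y)$ with $\Gamma(\tfrac12) = \sqrt{\pi}$ and $\Gamma(k+1) = k!$, the Beta factor equals $\sqrt{\pi}\,\Gamma(k+\tfrac12)/k!$, giving exactly the first summand $\tfrac{\sqrt{\pi}\,(2k(n+1)-3)!!\,\Gamma(k+\tfrac12)}{(2k(n+1)-2)!!\,k!}$ in Corollary~\ref{CR_3}. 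This step is routine bookkeeping with double-factorial and Gamma identities.

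Next I would treat the sum. In \eqref{Eq_3.3} the $j$-th term (before the outer power) is
$$
\frac{\pi(n+1)}{2^{\,2m-1+\frac{2(m+1)}{n+1}}(2m+n+3)}\cdot\frac{\binom{2m}{m-j(n+1)}}{B\!\left(\tfrac{m+1}{n+1}+j+1,\ \tfrac{m+1}{n+1}-j+1\right)}.
$$
The argument of the exponent of $2$ is $2m - 1 + 2k = 2(k(n+1)-1) - 1 + 2k = 2k(n+2) - 3$, which is the exponent appearing in the corollary; likewise $2m+n+3 = 2k(n+1)+n+1 = (n+1)(2k+1)$, and $m - j(n+1) = (k-j)(n+1) - 1$, so the binomial becomes $\binom{2k(n+1)-2}{(k-j)(n+1)-1}$. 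The genuinely substantive point is that the Beta function in the denominator has \emph{both} arguments differing by integers from $k+1$, namely $B(k+1+j, k+1-j)$, which is a finite rational quantity: $B(k+1+j,k+1-j) = \tfrac{(k+j)!\,(k-j)!}{(2k+1)!}$. Plugging this in turns the $j$-th term into
$$
\frac{\pi(n+1)(2k+1)!}{2^{\,2k(n+2)-3}(n+1)(2k+1)}\cdot\frac{\binom{2k(n+1)-2}{(k-j)(n+1)-1}}{(k+j)!\,(k-j)!}
= \frac{\pi}{2^{\,2k(n+2)-3}}\binom{2k(n+1)-2}{(k-j)(n+1)-1}\binom{2k}{k-j},
$$
since $\tfrac{(2k+1)!}{(2k+1)(k+j)!(k-j)!} = \tfrac{(2k)!}{(k+j)!(k-j)!} = \binom{2k}{k-j}$. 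This matches the sum in Corollary~\ref{CR_3} term by term.

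The main obstacle, such as it is, is purely the simultaneous algebraic management of the four kinds of factorial-type quantities (ordinary factorials, double factorials, binomials, and Gamma/Beta values) and making sure the prefactor cancellation $\tfrac{\pi(n+1)(2k+1)!}{(n+1)(2k+1)} \to \pi(2k)!$ is carried out consistently; there is no conceptual difficulty once Theorem~\ref{PP_1} is in hand. I would close by noting that the hypothesis needed to invoke Theorem~\ref{PP_1}, namely $\gamma = \tfrac{2(m+1)}{n+1} = 2k > 2\left[\tfrac{m}{n+1}\right] - 2 = 2(k-1) - 2 = 2k - 4$, holds trivially, and that $n \le 2m+1$ is automatic here since $m = k(n+1)-1 \ge n$ for $k \ge 1$.
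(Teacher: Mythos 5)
Your proposal is correct and is exactly the derivation the paper intends: the corollary is stated as an immediate consequence of Theorem \ref{PP_1}, obtained by setting $m=k(n+1)-1$ in (\ref{Eq_3.3}) and simplifying, and your substitutions (exponent $1/(2k)$, sum range $j=1,\dots,k-1$, power of two $2k(n+2)-3$, $2m+n+3=(n+1)(2k+1)$, and $B(k+1+j,k+1-j)=(k+j)!\,(k-j)!/(2k+1)!$ yielding $\binom{2k}{k-j}$) all check out, as does the verification of the hypotheses of Theorem \ref{PP_1}.
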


\setcounter{equation}{0}
\section{Estimates for even order derivatives of analytic 
functions with $\Re f \in h^\infty({\mathbb R}^2_+)$}

By (\ref{Eq_0.1M}),
\begin{equation} \label{Eq_1.3HO}
K_{2m,\infty}(\alpha)={(2m)! \over \pi}
 \int_{-\pi/2}^{\pi/2}
\left |\cos \left ( \alpha -(2m+1)\varphi\right )\right | 
\cos^{2m-1}\varphi d\varphi .
\end{equation}

The starting point of this section is the following assertion from the paper by Kresin and Maz'ya \cite{KM15}.
\begin{lemma} \label{LL_4} The equality
\begin{equation} \label{Eq_4.3HO}
{d K_{2m, \infty} \over d\alpha}\!=\!{(2m)! \over \pi(2m+1)^2 2^{2(m-1)}}\!\int_0^{\pi/2}\!\!\!
\big ( |\cos(\alpha-\varphi)|-|\cos(\alpha+\varphi)|\big )\Lambda_m(\varphi)d\varphi
\end{equation}
holds with
\begin{equation} \label{Eq_4.4HO}
\Lambda_m(\varphi)\!=\!\sum_{\ell=1}^m(-1)^{\ell}(2\ell-1)
\left (\begin{array}{ll}\!\!\! 2m-1\!\!\! \\ 
\hspace{0.1cm}\!\!\!m-\ell \!\!\!\end{array} \right )
{\sin {(2\ell-1)\varphi \over 2m+1}\over 
\sin {(2\ell-1)\pi \over 2(2m+1)}}\;.
\end{equation}
\end{lemma}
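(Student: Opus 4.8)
The plan is to start from the representation \eqref{Eq_1.3HO} for $K_{2m,\infty}(\alpha)$ and differentiate under the integral sign. The integrand involves $\bigl|\cos(\alpha-(2m+1)\varphi)\bigr|$, so $\partial_\alpha$ produces $-\operatorname{sgn}\bigl(\cos(\alpha-(2m+1)\varphi)\bigr)\sin(\alpha-(2m+1)\varphi)$ times $\cos^{2m-1}\varphi$. The differentiation is legitimate away from the measure-zero set where the cosine vanishes, since the resulting integrand is bounded; one should remark on this. The real work is to massage $\int_{-\pi/2}^{\pi/2}\operatorname{sgn}\bigl(\cos(\alpha-(2m+1)\varphi)\bigr)\sin(\alpha-(2m+1)\varphi)\cos^{2m-1}\varphi\,d\varphi$ into the stated symmetric form \eqref{Eq_4.3HO}--\eqref{Eq_4.4HO}. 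Since this lemma is quoted verbatim from \cite{KM15}, I would reproduce the argument found there.

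First I would expand $\cos^{2m-1}\varphi$ via the standard Fourier-cosine identity $\cos^{2m-1}\varphi=2^{-(2m-2)}\sum_{\ell=1}^{m}\binom{2m-1}{m-\ell}\cos(2\ell-1)\varphi$, which is the $\mu=2m-1$ analogue of the expansion already used in the proof of Lemma \ref{LL_1}. Substituting this into the derivative and using the product-to-sum formulas to multiply $\cos(2\ell-1)\varphi$ against the $\operatorname{sgn}(\cos(\cdot))\sin(\cdot)$ factor, the $\varphi$-integral over $[-\pi/2,\pi/2]$ reduces, for each $\ell$, to an explicit elementary integral. The periodicity/parity manipulations are the same in spirit as those in Lemma \ref{LL_1}: split $[-\pi/2,\pi/2]$ appropriately, change variables $\psi=\alpha-(2m+1)\varphi$, and exploit $\pi$-antiperiodicity of $\operatorname{sgn}(\cos\psi)\sin\psi$. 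Each such integral yields a term of the form $\sin\frac{(2\ell-1)\varphi}{2m+1}\big/\sin\frac{(2\ell-1)\pi}{2(2m+1)}$ carried inside a remaining $\varphi$-integral, together with the alternating factor $(-1)^\ell(2\ell-1)$ coming from differentiation and the scaling by $2m+1$.

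The key structural point that makes \eqref{Eq_4.3HO} come out is the symmetrization: one rewrites the single term $\operatorname{sgn}\bigl(\cos(\alpha-(2m+1)\varphi)\bigr)\sin(\alpha-(2m+1)\varphi)$, after the change of variables $\varphi\mapsto\varphi/(2m+1)$ inside the auxiliary integral, as a difference $|\cos(\alpha-\varphi)|-|\cos(\alpha+\varphi)|$ over $\varphi\in[0,\pi/2]$. Concretely, using $\frac{d}{d\alpha}|\cos(\alpha\mp\varphi)|=\mp\operatorname{sgn}(\cos(\alpha\mp\varphi))\sin(\alpha\mp\varphi)$ and the evenness of $\cos^{2m-1}\varphi$, the contributions from $\varphi$ and $-\varphi$ in \eqref{Eq_1.3HO} combine to give exactly the antisymmetric kernel $|\cos(\alpha-\varphi)|-|\cos(\alpha+\varphi)|$. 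Collecting the binomial coefficients and the normalizing sine in the denominator into the definition \eqref{Eq_4.4HO} of $\Lambda_m(\varphi)$ then yields \eqref{Eq_4.3HO}.

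The main obstacle is purely bookkeeping: tracking the prefactor $\dfrac{(2m)!}{\pi(2m+1)^2 2^{2(m-1)}}$ correctly through the change of variables $\varphi\mapsto\varphi/(2m+1)$ (which supplies one factor $1/(2m+1)$), the differentiation (which supplies the $(2\ell-1)$ and a further $(2m+1)$ in some intermediate step that gets absorbed), and the $2^{-(2m-2)}$ from the cosine-power expansion, while simultaneously verifying that the odd harmonics $\cos(2\ell-1)\varphi$ are precisely the ones that survive — the even harmonics and the would-be divergences at the zeros of $\cos(\alpha-(2m+1)\varphi)$ must cancel. Since the statement is cited from \cite{KM15}, I would present the derivation at the level of these structural steps and refer to that paper for the full computation.
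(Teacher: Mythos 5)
The paper contains no proof of this lemma: it is stated as ``the following assertion from the paper by Kresin and Maz'ya \cite{KM15}'', so your bottom line --- defer the computation to \cite{KM15} --- coincides with what the paper actually does, and your opening steps (differentiation under the integral sign, justified by boundedness of $\partial_\alpha|\cos(\cdot)|$, and the expansion $\cos^{2m-1}\varphi=2^{-(2m-2)}\sum_{\ell=1}^m\binom{2m-1}{m-\ell}\cos(2\ell-1)\varphi$) are correct and are the natural start.

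However, the sketch you offer in place of the computation misidentifies the key mechanism, so as a derivation it has a genuine gap. The kernel $|\cos(\alpha-\varphi)|-|\cos(\alpha+\varphi)|$ in (\ref{Eq_4.3HO}) is \emph{not} differentiated, so it cannot arise, as you claim, from $\frac{d}{d\alpha}|\cos(\alpha\mp\varphi)|$ combined with pairing $\varphi$ and $-\varphi$ in (\ref{Eq_1.3HO}): that pairing meets the \emph{even} factor $\cos^{2m-1}\varphi$ and yields the derivative of the sum $|\cos(\alpha-(2m+1)\varphi)|+|\cos(\alpha+(2m+1)\varphi)|$, not an undifferentiated difference. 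A complete argument must instead substitute $\psi=\alpha-(2m+1)\varphi$ and transfer the $\alpha$-derivative onto the harmonics $\cos\frac{(2\ell-1)(\alpha-\psi)}{2m+1}$ (boundary terms vanish because $\cos(2\ell-1)\varphi=0$ at $\varphi=\pm\pi/2$); this is exactly where the factor $2\ell-1$ and the second power of $2m+1$ in the prefactor come from, which you explicitly leave as ``absorbed in some intermediate step''. One must then split the interval of length $(2m+1)\pi$ into the $2m+1$ period intervals of $|\cos\psi|$ and sum the resulting translates of $\sin\frac{(2\ell-1)(\cdot)}{2m+1}$ --- the only possible source of the signs $(-1)^\ell$ and of the denominators $\sin\frac{(2\ell-1)\pi}{2(2m+1)}$ in (\ref{Eq_4.4HO}), neither of which your sketch derives --- and finally fold $\vartheta\mapsto-\vartheta$ against the odd factor $\sin\frac{(2\ell-1)\vartheta}{2m+1}$, which is what produces the undifferentiated kernel on $[0,\pi/2]$. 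Two smaller slips: since $2m-1$ is odd, the expansion of $\cos^{2m-1}\varphi$ contains only odd harmonics, so there are no ``even harmonics'' to cancel; and no ``would-be divergences'' occur at the zeros of $\cos(\alpha-(2m+1)\varphi)$, because the differentiated integrand is bounded.
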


{\bf Remark 1.} Before passing to applications of Lemma \ref{LL_4} we make two remarks. 
The first one concerns the range of $\beta$ in the evaluation of the maximum
$$
\max_\beta Q_{\mu,n,\gamma}(\beta),
$$
where the function $Q_{\mu,n,\gamma}(\beta)$ is defined by (\ref{Eq_1.1AB}).
It is clear, that $Q_{\mu,n,\gamma}(\beta)$ is $\pi$-periodic and even function in $\beta$. 
Therefore, we can limit our consideration of $Q_{\mu,n,\gamma}(\beta)$ to the interval $[0, \pi/2]$.

The second remark relates the sign of the function $|\cos(\beta -\varphi)|\!-\!|\cos(\beta +\varphi)|$, 
which appear inside of integral (\ref{Eq_4.3HO}). We show that
\begin{equation} \label{Eq_4.12AB}
|\cos(\beta -\varphi)|\geq|\cos(\beta +\varphi)|
\end{equation}
for $\beta, \varphi \in [0, \pi/2]$. In fact, since
\begin{eqnarray*}
|\cos(\beta -\varphi)|\! -\!|\cos(\beta +\varphi )|\! =\!\left \{
\begin{array}{lll}\displaystyle{
   \cos(\beta -\varphi)\! -\!\cos(\beta +\varphi)}   
   & \!\!\!\quad\hbox {for}\!\quad \varphi \in \left [\;0, {\pi\over 2}-\beta \right ],\\
     & \\ \displaystyle{
      \cos(\beta -\varphi )\! +\!\cos(\beta +\varphi )}& \!\!\!\quad\hbox {for}\!\quad 
      \varphi \in \left ({\pi\over 2}-\beta , {\pi \over 2} \right ],
\end{array}\right .
\end{eqnarray*}
it follows that
\begin{eqnarray*} 
|\cos(\beta -\varphi )|\! -\!|\cos(\beta +\varphi )|\! =\!\left \{
\begin{array}{lll}\displaystyle{
   2\sin\varphi \sin \beta}   
   & \quad\hbox {for}\quad \varphi \in \left [\;0, {\pi\over 2}-\beta \right ]\;,\\
     & \\ \displaystyle{
      2\cos\varphi \cos\beta }& \quad\hbox {for}\quad 
      \varphi \in \left ({\pi\over 2}-\beta , {\pi \over 2} \right ] \;,
\end{array}\right .
\end{eqnarray*}
and hence (\ref{Eq_4.12AB}) holds for $\beta, \varphi \in [0, \pi/2]$. 
Besides, the equality sign in (\ref{Eq_4.12AB}) holds 
only for $\beta=0$ or for $\beta=\pi/2$ provided that $\varphi\in(0, \pi/2)$.

\smallskip
In the next two assertions we deal with the values of constants ${\rm K}_{6,\infty}$ and ${\rm K}_{8,\infty}$.

\setcounter{theorem}{5}
\begin{corollary} \label{CO_6}
Let $\Re f \in h^\infty({\mathbb R}^2_{+})$, and let  $z $ be  an arbitrary point in 
${\mathbb C}_+$. The sharp constant ${\rm K}_{6, \infty} $ in the inequality
\begin{equation} \label{Eq_4.16HO}
|f^{(6)}(z)| \leq {{\rm K}_{6,\infty} \over (\Im z)^6}\;||\Re f||_\infty
\end{equation}
is given by
\begin{equation} \label{Eq_6.7HO}
{\rm K}_{6, \infty}=
{105 \sqrt{2}\over 4\pi}\!\left (9\cos{\pi \over 28}+3\cos{3\pi \over 28}+ \cos{5\pi \over 28} \right ).
\end{equation}
\end{corollary}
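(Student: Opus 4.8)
\textbf{Proof proposal for Corollary~\ref{CO_6}.}
The plan is to reduce the computation of ${\rm K}_{6,\infty}$ to locating the maximum of the single-variable function $K_{6,\infty}(\alpha)$ given by~(\ref{Eq_1.3HO}) with $m=3$, and then to evaluate the resulting integral in closed form. First I would record that, by Remark~1, $Q_{\mu,n,\gamma}(\beta)$—and hence $K_{6,\infty}(\alpha)$ after the shift $\beta=\alpha+3\pi$ appearing in~(\ref{Eq_0.1M})—is $\pi$-periodic and even, so it suffices to search for the maximum over $\beta\in[0,\pi/2]$. The two natural candidate endpoints are $\beta=0$ and $\beta=\pi/2$; by the sign analysis of Remark~1 these are precisely the zeros of the factor $|\cos(\beta-\varphi)|-|\cos(\beta+\varphi)|$ in Lemma~\ref{LL_4}, hence the only possible critical points coming from that factor. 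One then checks via Lemma~\ref{LL_4} (with $m=3$) that $dK_{6,\infty}/d\alpha$ does not vanish at any interior point: the remaining factor $\Lambda_3(\varphi)$, a short trigonometric sum of three terms, must be shown to keep a fixed sign on $(0,\pi/2)$, so that the integrand in~(\ref{Eq_4.3HO}) has constant sign and the derivative is nonzero for $\beta\in(0,\pi/2)$.

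Next I would compare the two endpoint values. At $\beta=0$ the constant is governed by Lemma~\ref{LL_1} (equivalently by the $n=2m$ case already recorded, but here with the exponent $\gamma=1$ rather than $\gamma=2(m+1)/(n+1)$, so Lemma~\ref{LL_1} does not apply directly and one must instead evaluate $Q_{6,6,1}(0)=\int_{-\pi/2}^{\pi/2}|\cos 7\varphi|\cos^{6}\varphi\,d\varphi$ by hand). Writing $\cos^{6}\varphi$ in its Fourier form $\tfrac{1}{32}\bigl(10+15\cos2\varphi+6\cos4\varphi+\cos6\varphi\bigr)$ and using the standard integrals $\int_{-\pi/2}^{\pi/2}|\cos 7\varphi|\cos 2k\varphi\,d\varphi$ (these are elementary: $|\cos 7\varphi|$ has a known Fourier expansion on $[-\pi/2,\pi/2]$, or one splits the interval at the zeros of $\cos 7\varphi$), one obtains a finite combination of rational multiples of $1$. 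A parallel computation at $\beta=\pi/2$, where $|\cos(\pi/2-(2m+1)\varphi)|=|\sin 7\varphi|$, produces the second value. The larger of the two is ${\rm K}_{6,\infty}\cdot\pi/(2m)!=\pi{\rm K}_{6,\infty}/720$; matching it against the cosine sum in~(\ref{Eq_6.7HO}) is then a matter of simplifying $\frac{105\sqrt2}{4}\bigl(9\cos\frac{\pi}{28}+3\cos\frac{3\pi}{28}+\cos\frac{5\pi}{28}\bigr)$, where the angles $\pi/28,3\pi/28,5\pi/28$ arise as $(2\ell-1)\pi/(2\cdot 7)$ for $\ell=1,2,3$, exactly the arguments appearing in the denominators of $\Lambda_3$ in~(\ref{Eq_4.4HO}).

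I expect the main obstacle to be the sign analysis of $\Lambda_3(\varphi)$ on $(0,\pi/2)$, i.e. showing that
$$
\sum_{\ell=1}^{3}(-1)^{\ell}(2\ell-1)\binom{5}{3-\ell}\,\frac{\sin\frac{(2\ell-1)\varphi}{7}}{\sin\frac{(2\ell-1)\pi}{14}}
$$
never changes sign, since this is what rules out interior maxima and forces the extremum to an endpoint. The coefficients $-1\cdot\binom{5}{2}$, $+3\cdot\binom{5}{1}$, $-5\cdot\binom{5}{0}$ are $-10,+15,-5$, so after dividing by $5$ one must control $\,-2\,\frac{\sin(\varphi/7)}{\sin(\pi/14)}+3\,\frac{\sin(3\varphi/7)}{\sin(3\pi/14)}-\frac{\sin(5\varphi/7)}{\sin(5\pi/14)}$ on $(0,\pi/2)$; a convexity/monotonicity argument in $\varphi$, or a comparison of the three terms term-by-term using that $\sin$ is concave on $(0,\pi)$, should do it, but it is the one genuinely non-mechanical step. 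Once the sign is pinned down, everything else is the routine endpoint comparison and trigonometric bookkeeping sketched above, and the final identification with~(\ref{Eq_6.7HO}) follows by collecting terms. The same scheme, with $m=4$ and $\Lambda_4$ a four-term sum, will handle ${\rm K}_{8,\infty}$ in the companion result.
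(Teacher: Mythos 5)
Your outline follows the paper's own strategy (Lemma \ref{LL_4} combined with Remark 1, a fixed sign for $\Lambda_3$, then evaluation of the extremal integral), but the step you yourself flag as ``genuinely non-mechanical''---proving that $\Lambda_3(\varphi)$ keeps one sign on $(0,\pi/2)$---is exactly where the proposal has a real gap, and the tools you suggest will not close it. Note that $\Lambda_3$ vanishes at \emph{both} ends of the interval: at $\varphi=0$ trivially, and at $\varphi=\pi/2$ because the normalized coefficients satisfy $-2+3-1=0$; moreover near $\varphi=0$ the three terms are of size roughly $9$, $14.4$ and $5.5$ times $\varphi/7$ and cancel to about $-0.1\,(\varphi/7)$. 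So the negativity of $\Lambda_3$ is the result of delicate cancellation, and a term-by-term comparison or a generic concavity estimate for $\sin$ cannot detect it, since any such bound would have to be exact at both boundary zeros simultaneously. The paper's proof handles this by rewriting $\Lambda_3$ as a polynomial in $\sin^2(\varphi/7)$ via the triple- and quintuple-angle identities, factoring out $\sin(\varphi/7)\bigl(\sin^2\frac{\pi}{14}-\sin^2\frac{\varphi}{7}\bigr)$ (which absorbs the two boundary zeros), and then bounding the remaining quadratic $F_3(\varphi)$ above by $-4\cos^2\frac{\pi}{14}\sin^2\frac{\pi}{14}<0$. Without this factorization, or an equally exact argument, the sign claim---and hence the location of the maximum---is unproved.

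There is also a concrete computational error in the endpoint evaluation: for $p=\infty$ (so $q=1$, $n=6$) the exponent in (\ref{Eq_0.1M}) is $(n+1)q-2=5$, i.e.\ by (\ref{Eq_1.3HO}) the extremal value is $\frac{6!}{\pi}\int_{-\pi/2}^{\pi/2}|\cos 7\varphi|\cos^{5}\varphi\,d\varphi$, not $\int_{-\pi/2}^{\pi/2}|\cos 7\varphi|\cos^{6}\varphi\,d\varphi$ as you wrote; expanding $\cos^{6}\varphi$ in its Fourier form therefore computes the wrong integral and cannot reproduce (\ref{Eq_6.7HO}) (the correct route splits the integral at the zeros $\pi/14,\,3\pi/14,\,5\pi/14$ of $\cos 7\varphi$, which is where the angles $\pi/28,\,3\pi/28,\,5\pi/28$ eventually come from). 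Your plan of computing both endpoints $\beta=0$ and $\beta=\pi/2$ and comparing is harmless but unnecessary: once $\Lambda_3<0$ is established, (\ref{Eq_4.12AB}) gives $dK_{6,\infty}/d\alpha<0$ on $(0,\pi/2)$, so the maximum sits at $\alpha=0$ with no comparison needed.
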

\begin{proof} By Lemma \ref{LL_4},
\begin{equation} \label{Eq_4.18HO}
{d K_{6, \infty} \over d\alpha}\!=\!{45 \over 49\pi}\!\int_0^{\pi/2}\!\!\!
\big ( |\cos(\alpha-\varphi)|-|\cos(\alpha+\varphi)|\big )\Lambda_3(\varphi)\;d\varphi\;,
\end{equation}
where
{\large 
\begin{equation} \label{Eq_4.15HO}
\Lambda_3(\varphi)=5\left (-2{\sin{\varphi \over 7}\over \sin{\pi \over 14} }+
3{\sin{3\varphi \over 7} \over \sin{3\pi \over 14}} - {\sin{5\varphi \over 7} \over \sin{5\pi \over 14}}\right ).
\end{equation}
}
Using the identities $\sin 3x=3\sin x-4\sin^3 x, \sin 5x=5\sin x-20\sin^3 x+16\sin^5 x$ in (\ref{Eq_4.15HO}), we find
\begin{equation} \label{Eq_4.15HO1}
\Lambda_3(\varphi)=80{\sin{\pi \over 14}\sin{\varphi \over 7}  \over \sin{3\pi \over 14}\sin{5\pi \over 14}}
\left ( \sin^2{\pi \over 14} - \sin^2{\varphi \over 7} \right )F_3(\varphi)\;,
\end{equation}
where
$$
F_3(\varphi)=\left (8\sin^2{\pi \over 14} -7 \right )\sin^2{\pi \over 14}+\left (3-4\sin^2{\pi \over 14} \right )\sin^2{\varphi \over 7}\;.
$$
Since $3-4\sin^2(\pi / 14)>3-4\sin^2(\pi / 6)>0$, we have
$$
F_3(\varphi)<\left (8\sin^2{\pi \over 14} -7 \right )\sin^2{\pi \over 14}+\left (3-4\sin^2{\pi \over 14} \right )\sin^2{\pi \over 14}=
-4\cos^2{\pi \over 14}\sin^2{\pi \over 14},
$$
which together with (\ref{Eq_4.15HO1}) proves the inequality $\Lambda_3(\varphi)<0$ for $\varphi \in (0, \pi /2)$.
Now, by (\ref{Eq_4.18HO}) and (\ref{Eq_4.12AB}) we conclude that
$$
{d K_{6, \infty} \over d\alpha}<0
$$
for $\alpha\in (0, \pi/2)$. Thus, by (\ref{Eq_1.3HO}),
\begin{eqnarray*}
\hspace{-7mm}& &{\rm K}_{6, \infty}=K_{6, \infty}(0)={6! \over \pi}
\int_{-\pi/2}^{\pi/2}\big |\cos 7\varphi\big |\cos^5 \varphi \;d\varphi=
2{6!\over \pi}\left \{\!\int_{0}^{\pi/14}\!\!\!\!\cos 7\varphi\cos^5 \varphi \right . d\varphi\\
\hspace{-7mm}& &\\
\hspace{-7mm}& &\left .
\!-\!\int_{\pi/14}^{3\pi/14}\!\!\!\!\cos 7\varphi\cos^5 \varphi \;d\varphi \!+\!
\int_{3\pi/14}^{5\pi/14}\!\!\!\!\cos 7\varphi\cos^5 \varphi \;d\varphi
-\int_{5\pi/14}^{\pi/2}\!\!\!\!\cos 7\varphi\cos^5 \varphi \;d\varphi\right \}.
\end{eqnarray*}
Evaluating the integrals on the right-hand side of the last equality, 
we arrive at (\ref{Eq_6.7HO}).
\end{proof}

\setcounter{theorem}{6}
\begin{corollary} \label{CO_7}
Let $\Re f \in h^\infty({\mathbb R}^2_{+})$, and let  $z $ be  an arbitrary point in 
${\mathbb C}_+$. The sharp constant ${\rm K}_{8, \infty} $ in the inequality
\begin{equation} \label{Eq_4.16HO8}
|f^{(8)}(z)| \leq {{\rm K}_{8,\infty} \over (\Im z)^8}\;||\Re f||_\infty
\end{equation}
is given by
\begin{equation} \label{Eq_6.7HO8}
{\rm K}_{8, \infty}={315 \over 8\pi}\left \{ 175+9\sqrt{2} \left ( 17\cos{\pi \over 36}+
9\cos{5\pi \over 36}+11\cos{7\pi \over 36} \right ) \right \}\;.
\end{equation}
\end{corollary}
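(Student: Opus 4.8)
The plan is to mimic the proof of Corollary~\ref{CO_6} step by step, now for $m=4$, $n=8$. First I would apply Lemma~\ref{LL_4} with $m=4$ to write
$$
{d K_{8, \infty} \over d\alpha}={8! \over \pi\cdot 9^2\cdot 2^{6}}\int_0^{\pi/2}\big ( |\cos(\alpha-\varphi)|-|\cos(\alpha+\varphi)|\big )\Lambda_4(\varphi)\,d\varphi,
$$
where by (\ref{Eq_4.4HO})
$$
\Lambda_4(\varphi)=\sum_{\ell=1}^4(-1)^{\ell}(2\ell-1)\binom{7}{4-\ell}\,{\sin\frac{(2\ell-1)\varphi}{9}\over \sin\frac{(2\ell-1)\pi}{18}}.
$$
Explicitly this is $-7\,\dfrac{\sin(\varphi/9)}{\sin(\pi/18)}+21\,\dfrac{\sin(3\varphi/9)}{\sin(3\pi/18)}-35\,\dfrac{\sin(5\varphi/9)}{\sin(5\pi/18)}+7\,\dfrac{\sin(7\varphi/9)}{\sin(7\pi/18)}$, with the binomial coefficients $\binom73=35,\ \binom72=21,\ \binom71=7,\ \binom70=1$ up to the overall $(2\ell-1)$ factors.

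The crux, exactly as in Corollary~\ref{CO_6}, is to prove that $\Lambda_4(\varphi)<0$ for $\varphi\in(0,\pi/2)$. My approach is to set $t=\sin(\varphi/9)$ and use the multiple-angle expansions $\sin 3x=3\sin x-4\sin^3x$, $\sin 5x=5\sin x-20\sin^3x+16\sin^5x$, $\sin 7x=7\sin x-56\sin^3x+112\sin^5x-64\sin^7x$ to express $\Lambda_4(\varphi)$ as $t$ times a polynomial in $t^2$ of degree $3$. Since $\varphi/9$ ranges over $(0,\pi/18)$, we have $t\in(0,\sin(\pi/18))$, so the relevant range is $0<t^2<\sin^2(\pi/18)$. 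I would then factor out $\big(\sin^2(\pi/18)-t^2\big)$ — which vanishes at the endpoint, as happens in the $m=3$ case — leaving a quadratic factor in $t^2$ whose sign on the interval I must control, using the crude bounds $\sin(\pi/18)<\sin(\pi/6)=\tfrac12$ together with positivity/negativity of the sines $\sin(3\pi/18),\sin(5\pi/18),\sin(7\pi/18)$ in the denominators. I expect this polynomial sign analysis to be the main obstacle: unlike the $m=3$ case the polynomial is cubic in $t^2$ and the coefficients are messier, so one may need an auxiliary monotonicity argument or a slightly sharper numerical bound on $\sin(\pi/18)$ and $\sin(7\pi/18)$ than the $\pi/6$ comparison, but the structure should still yield $\Lambda_4(\varphi)<0$ on $(0,\pi/2)$.

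Granting $\Lambda_4(\varphi)<0$, I invoke Remark~1: on $[0,\pi/2]$ the inequality $|\cos(\alpha-\varphi)|\ge|\cos(\alpha+\varphi)|$ holds (inequality~(\ref{Eq_4.12AB})), with equality only at $\alpha=0$ or $\alpha=\pi/2$ for interior $\varphi$. Hence the integrand in the formula for $dK_{8,\infty}/d\alpha$ is $\le 0$ and not identically zero for $\alpha\in(0,\pi/2)$, so $dK_{8,\infty}/d\alpha<0$ there. Combined with the $\pi$-periodicity and evenness of $K_{8,\infty}(\alpha)$ (Remark~1), the maximum over $\alpha$ is attained at $\alpha=0$, giving ${\rm K}_{8,\infty}=K_{8,\infty}(0)$.

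Finally I would compute $K_{8,\infty}(0)$ from (\ref{Eq_1.3HO}) with $m=4$:
$$
{\rm K}_{8,\infty}=\frac{8!}{\pi}\int_{-\pi/2}^{\pi/2}\big|\cos 9\varphi\big|\cos^{7}\varphi\,d\varphi=\frac{2\cdot 8!}{\pi}\sum_{k=0}^{4}(-1)^k\int_{(2k-1)\pi/18}^{(2k+1)\pi/18\wedge \pi/2}\cos 9\varphi\,\cos^{7}\varphi\,d\varphi,
$$
splitting $[0,\pi/2]$ at the zeros $\varphi=\pi/18,3\pi/18,5\pi/18,7\pi/18$ of $\cos 9\varphi$ (the last subinterval being $[7\pi/18,\pi/2]$). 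Each integral is elementary: expand $\cos^7\varphi=2^{-7}\sum_{j}\binom{7}{j}\cos\big((7-2j)\varphi\big)$, multiply by $\cos 9\varphi$, linearize products of cosines, and integrate termwise. Collecting the resulting values of $\sin$ at multiples of $\pi/18$ and simplifying (the cosines $\cos(\pi/36),\cos(5\pi/36),\cos(7\pi/36)$ arise from $\sin$ at half-integer multiples of $\pi/18$) yields (\ref{Eq_6.7HO8}). This last step is routine but lengthy bookkeeping; I would organize it by tabulating the contribution of each $\binom7j$ term across the five subintervals before summing.
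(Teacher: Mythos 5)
Your plan follows the right template (Lemma~\ref{LL_4} plus Remark~1, then an explicit evaluation of the extremal integral), but it goes wrong at the decisive step: the sign of $\Lambda_4$. For $m=4$ one has $\Lambda_4(\varphi)>0$ on $(0,\pi/2)$, not $<0$. This is exactly what the paper proves: after the multiple-angle reduction one gets
$\Lambda_4(\varphi)=896\,\frac{\sin^2(\pi/18)\sin(\varphi/9)}{\sin(5\pi/18)\sin(7\pi/18)}\bigl(\sin^2(\pi/18)-\sin^2(\varphi/9)\bigr)F_4(\varphi)$ with $F_4>0$ (a quadratic in $\sin^2(\varphi/9)$ bounded below via $\sin(\pi/18)<\sin(\pi/12)$), so $dK_{8,\infty}/d\alpha>0$ on $(0,\pi/2)$ and the maximum is attained at $\alpha=\pi/2$, giving
${\rm K}_{8,\infty}=K_{8,\infty}(\pi/2)=\frac{8!}{\pi}\int_{-\pi/2}^{\pi/2}|\sin 9\varphi|\,\cos^7\varphi\,d\varphi$,
split at the zeros $\pi/9,2\pi/9,\pi/3,4\pi/9$ of $\sin 9\varphi$. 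A quick numerical check (e.g.\ $\varphi=\pi/4$) confirms $\Lambda_4>0$, so your attempted proof of $\Lambda_4<0$ cannot succeed; the $m=3$ sign pattern does not carry over to $m=4$, and the maximizing $\alpha$ flips from $0$ to $\pi/2$. Consequently the quantity you propose to evaluate, $K_{8,\infty}(0)=\frac{8!}{\pi}\int_{-\pi/2}^{\pi/2}|\cos 9\varphi|\cos^7\varphi\,d\varphi$, is strictly smaller than the sharp constant and does not equal (\ref{Eq_6.7HO8}).

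A secondary slip: your explicit form of $\Lambda_4$ has wrong coefficients. From (\ref{Eq_4.4HO}) with $m=4$ the coefficients are $(-1)^{\ell}(2\ell-1)\binom{7}{4-\ell}$, i.e.\ $-35,\;+63,\;-35,\;+7$, which is the paper's
$\Lambda_4(\varphi)=7\Bigl(-5\,\frac{\sin(\varphi/9)}{\sin(\pi/18)}+18\sin\frac{\varphi}{3}-5\,\frac{\sin(5\varphi/9)}{\sin(5\pi/18)}+\frac{\sin(7\varphi/9)}{\sin(7\pi/18)}\Bigr)$,
not $-7,+21,-35,+7$ as you wrote (you dropped the $(2\ell-1)$ factors inconsistently). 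The structural expectations you state (factor out $\sin(\varphi/9)\bigl(\sin^2(\pi/18)-\sin^2(\varphi/9)\bigr)$, control a remaining polynomial in $\sin^2(\varphi/9)$, then reduce $\cos^7$ and integrate termwise) are sound and match the paper, but the sign conclusion, the location of the maximum, and hence the final integral must all be corrected as above.
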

\begin{proof} By Lemma \ref{LL_4},
\begin{equation} \label{Eq_4.18HO8}
{d K_{8, \infty} \over d\alpha}\!=\!{70 \over 9\pi}\!\int_0^{\pi/2}\!\!\!
\big ( |\cos(\alpha-\varphi)|-|\cos(\alpha+\varphi)|\big )\Lambda_4(\varphi)\;d\varphi\;,
\end{equation}
where
{\large 
\begin{equation} \label{Eq_4.15HO8}
\Lambda_4(\varphi)=7\left (-5{\sin{\varphi \over 9}\over \sin{\pi \over 18} }+
18\sin{3\varphi \over 9} -5 {\sin{5\varphi \over 9} \over \sin{5\pi \over 18}}+
{\sin{7\varphi \over 9} \over \sin{7\pi \over 18}}\right ).
\end{equation}
}
Using the identities $\sin 3x=3\sin x-4\sin^3 x, \sin 5x=5\sin x-20\sin^3 x+16\sin^5 x$ and 
$\sin 7x=7\sin x-56\sin^3 x+112\sin^5 x- 64\sin^7 x$ in (\ref{Eq_4.15HO}), we find
\begin{equation} \label{Eq_4.15HO18}
\Lambda_4(\varphi)=896{\sin^2{\pi \over 18}\sin{\varphi \over 9}  \over \sin{5\pi \over 18}\sin{7\pi \over 18}}
\left ( \sin^2{\pi \over 18} - \sin^2{\varphi \over 9} \right )F_4(\varphi)\;,
\end{equation}
where
\begin{eqnarray*}
\hspace{-7mm}& &F_4(\varphi)=\left (155-604\sin^2{\pi \over 18} +768\sin^4{\pi \over 18} -320\sin^6{\pi \over 18}\right )\sin^4{\pi \over 18}\\
\hspace{-7mm}& &\\
\hspace{-7mm}& &+\left (-90+396\sin^2{\pi \over 18}-560\sin^4{\pi \over 18}+256\sin^6{\pi \over 18} \right )
\sin^2{\pi \over 18}\sin^2{\varphi \over 9}\\
\hspace{-7mm}& &\\
\hspace{-7mm}& &+\left (15-80\sin^2{\pi \over 18}+128\sin^4{\pi \over 18}-64\sin^6{\pi \over 18} \right )\sin^4{\varphi \over 9}\;.
\end{eqnarray*}
It follows
\begin{eqnarray*}
\hspace{-7mm}& &F_4(\varphi)>\left (155-604\sin^2{\pi \over 18} +768\sin^4{\pi \over 18} -320\sin^6{\pi \over 18}\right )\sin^4{\pi \over 18}\\
\hspace{-7mm}& &\\
\hspace{-7mm}& &-10\left (9+56\sin^4{\pi \over 18} \right )\sin^4{\pi \over 18}
-16\left (5+4\sin^4{\pi \over 18} \right )\sin^6{\pi \over 18}\\
\hspace{-7mm}& &\\
\hspace{-7mm}& &=\left (65-684\sin^2{\pi \over 18}+208 \sin^4{\pi \over 18}-384 \sin^6{\pi \over 18}\right )\sin^4{\pi \over 18}\;.
\end{eqnarray*}
Using this inequality and $\sin(\pi/12)=\big (\sqrt{6}-\sqrt{2}\; \big )/4$, we obtain
\begin{eqnarray*}
\hspace{-7mm}& &F_4(\varphi)>\left (65-684\sin^2{\pi \over 18}-384 \sin^6{\pi \over 18}\right )\sin^4{\pi \over 18}\\
\hspace{-7mm}& &\\
\hspace{-7mm}& &>\left (65-684\sin^2{\pi \over 12}-384 \sin^6{\pi \over 12}\right )\sin^4{\pi \over 18}=
\left (261\sqrt{3}-433\right )\sin^4{\pi \over 18}>0\;.
\end{eqnarray*}
The last estimate together with (\ref{Eq_4.15HO18}) proves the inequality $\Lambda_4(\varphi)>0$ for $\varphi \in (0, \pi /2)$.
Now, by (\ref{Eq_4.18HO8}) and (\ref{Eq_4.12AB}) we conclude that
$$
{d K_{8, \infty} \over d\alpha}>0
$$
for $\alpha\in (0, \pi/2)$. Thus, by (\ref{Eq_1.3HO}),
\begin{eqnarray*}
{\rm K}_{8, \infty}=K_{8, \infty}(\pi/2)&\!\!=\!\!&{8! \over \pi}
\int_{-\pi/2}^{\pi/2}\big |\sin 9\varphi\big |\cos^7 \varphi \;d\varphi=
2{8!\over \pi}\left \{\!\int_{0}^{\pi/9}\!\!\!\!\sin 9\varphi\cos^7 \varphi \right . d\varphi\\
& &\\
& &\!-\!\int_{\pi/9}^{2\pi/9}\!\!\!\!\sin 9\varphi\cos^7 \varphi\;d\varphi +
\int_{2\pi/9}^{\pi/3}\!\!\!\!\sin 9\varphi\cos^7 \varphi\;d\varphi\\
& &\\
& &\left . -\int_{\pi/3}^{4\pi/9}\!\!\!\!\sin 9\varphi\cos^7 \varphi \;d\varphi +
\int_{4\pi/9}^{\pi/2}\!\!\!\!\sin 9\varphi\cos^7 \varphi \;d\varphi\right \}\;.
\end{eqnarray*}
After evaluating the integrals on the right-hand side of the last equality, 
we arrive at (\ref{Eq_6.7HO8}).
\end{proof}

Now we apply Lemma \ref{LL_1} in the proof of the following assertion.

\setcounter{theorem}{1} 
\begin{theorem} \label{TR_1} The following two-side inequality
\begin{equation} \label{Eq_4.1AB}
{2\over \pi}\big ((2m-1)!! \big )^2 
< {\rm K}_{2m, \infty }< {2m \over 2m-1}\;{2\over \pi}\big ((2m-1)!! \big )^2
\end{equation}
holds.
\end{theorem}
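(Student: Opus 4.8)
The plan is to estimate the quantity $\mathrm{K}_{2m,\infty}=\max_\alpha K_{2m,\infty}(\alpha)$ from below and from above by comparing it with the value of $K_{2m,\infty}(\alpha)$ at the distinguished point $\alpha=0$ (or, after a shift, $\beta=0$), where Lemma \ref{LL_1} is directly applicable. Recall from \eqref{Eq_1.3HO} that $K_{2m,\infty}(\alpha)$ is $(2m)!/\pi$ times the integral $\int_{-\pi/2}^{\pi/2}|\cos(\alpha-(2m+1)\varphi)|\cos^{2m-1}\varphi\,d\varphi$. Writing this in the notation of \eqref{Eq_1.1AB} with $n=2m$, $\gamma=1$, and $\mu=2m-1$, the obstruction to applying Lemma \ref{LL_1} verbatim is that there $\mu=2m$ is \emph{even}, whereas here the cosine power is $2m-1$, \emph{odd}. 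So the first step is to handle this parity mismatch by bracketing $\cos^{2m-1}\varphi$ between even powers on $[-\pi/2,\pi/2]$: on that interval $\cos\varphi\in[0,1]$, hence $\cos^{2m}\varphi\le\cos^{2m-1}\varphi\le\cos^{2m-2}\varphi$, with strict inequality on $(0,\pi/2)$ away from $0$. This lets me sandwich the relevant integral between $Q_{2m,2m,1}(\cdot)$-type quantities and $Q_{2m-2,2m,1}(\cdot)$-type quantities, to which the lemma applies.

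The lower bound is the easier half. First I would note the general inequality $\mathrm{K}_{2m,\infty}=\max_\alpha K_{2m,\infty}(\alpha)\ge K_{2m,\infty}(0)$, which by \eqref{Eq_1.3HO} equals $\frac{(2m)!}{\pi}\int_{-\pi/2}^{\pi/2}|\cos(2m+1)\varphi|\cos^{2m-1}\varphi\,d\varphi$. Then I bound $\cos^{2m-1}\varphi>\cos^{2m}\varphi$ on $(0,\pi/2)$ (away from the endpoints the inequality is strict, so we get a strict inequality after integration), which yields $\mathrm{K}_{2m,\infty}>\frac{(2m)!}{\pi}\int_{-\pi/2}^{\pi/2}|\cos(2m+1)\varphi|\cos^{2m}\varphi\,d\varphi=\frac{(2m)!}{\pi}Q_{2m,2m,1}(0)$. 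Now $n=2m$ so the index range in Lemma \ref{LL_1} forces $[\,m/(n+1)\,]=[\,m/(2m+1)\,]=0$ for $m\ge 1$, the correcting sum is empty, and \eqref{Eq_1.3AB} collapses to $\frac{(2m-1)!!}{(2m)!!}B(1,\tfrac12)=\frac{(2m-1)!!}{(2m)!!}\cdot 2$. Multiplying by $(2m)!/\pi$ and using $(2m)!=(2m-1)!!\,(2m)!!$ gives exactly $\frac{2}{\pi}\big((2m-1)!!\big)^2$, the claimed left-hand side of \eqref{Eq_4.1AB}. (For $m=1$ one should check the strictness separately, but it holds.)

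For the upper bound I would start from $K_{2m,\infty}(\alpha)\le\frac{(2m)!}{\pi}\int_{-\pi/2}^{\pi/2}|\cos(\alpha-(2m+1)\varphi)|\cos^{2m-2}\varphi\,d\varphi=\frac{(2m)!}{\pi}Q_{2m-2,2m,1}(\beta)$, valid for every $\alpha$ (hence for every $\beta=\alpha+m\pi$), using $\cos^{2m-1}\varphi\le\cos^{2m-2}\varphi$. Taking the max over $\beta$ and invoking Lemma \ref{LL_1} with the substitution $m\rightsquigarrow m-1$, $n=2m$, $\gamma=1$: we need $m-1\le n=2m$, which holds, and $\gamma=1>-1$, so the "$m\le n$" branch \eqref{Eq_1.4AB} applies, giving $\max_\beta Q_{2m-2,2m,1}(\beta)=\frac{(2m-3)!!}{(2m-2)!!}B(1,\tfrac12)=\frac{2(2m-3)!!}{(2m-2)!!}$. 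Therefore $\mathrm{K}_{2m,\infty}\le\frac{(2m)!}{\pi}\cdot\frac{2(2m-3)!!}{(2m-2)!!}$. The final step is the bookkeeping: $(2m)!=(2m)!!\,(2m-1)!!$, $(2m)!!=2m\,(2m-2)!!$, and $(2m-1)!!=(2m-1)(2m-3)!!$, so the right side becomes $\frac{2}{\pi}\cdot 2m\,(2m-3)!!\,(2m-1)(2m-3)!!=\frac{2}{\pi}\cdot\frac{2m}{2m-1}\big((2m-1)!!\big)^2$. Since $\cos^{2m-1}\varphi<\cos^{2m-2}\varphi$ strictly on $(0,\pi/2)$ the inequality $\mathrm{K}_{2m,\infty}<U_{2m}$ is strict, completing the proof. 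The one point that needs a touch of care is confirming that the branch conditions of Lemma \ref{LL_1} are met in each of the two applications (in particular that the "$m\le n$" branch is the correct one for the upper bound) and that the small cases $m=1$ are not degenerate; everything else is the factorial arithmetic above.
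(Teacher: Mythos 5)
Your proposal is correct and follows essentially the same route as the paper: sandwich $\cos^{2m-1}\varphi$ between $\cos^{2m}\varphi$ and $\cos^{2m-2}\varphi$, evaluate the resulting $\beta$-independent integrals via Lemma \ref{LL_1}, and finish with double-factorial arithmetic. One cosmetic point: for the lower bound the applicable branch is (\ref{Eq_1.4AB}) (the case $m\le n$, here $m\le 2m$), not (\ref{Eq_1.3AB}), whose hypothesis $m\ge n+1$ fails for $n=2m$; the value you obtain is nevertheless the correct one, since the would-be correction sum is empty.
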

\begin{proof} By (\ref{Eq_1.3HO}),
$$
K_{2m,\infty}(\alpha)<{(2m)! \over \pi} \int_{-\pi/2}^{\pi/2}
\big |\cos \big ( \alpha -(2m+1)\varphi\big )\big | \cos^{2(m-1)}\varphi\; d\varphi\;.
$$
From this and equality (\ref{Eq_1.4AB}) with $m-1$ instead of $m$ and $n=2m, \gamma=1$, we obtain
\begin{equation} \label{Eq_4.6AB}
{\rm K}_{2m,\infty}(\alpha) < {2(2m)! \over \pi}{(2m-3)!! \over (2m-2)!!}={2\over \pi}{2m \over 2m-1}{(2m-1)! (2m-1)!! \over (2m-2)!!}\;,
\end{equation}
which together with (\ref{Eq_0.3M}) proves the upper estimate in (\ref{Eq_4.1AB}).

Now we turn to the inverse estimate of ${\rm K}_{2m,\infty}$ in (\ref{Eq_4.1AB}). It follows from (\ref{Eq_0.3M}) and (\ref{Eq_1.3HO}),
$$
{\rm K}_{2m,\infty}\geq K_{2m,\infty}(\alpha)>{(2m)! \over \pi} \int_{-\pi/2}^{\pi/2}
\big |\cos \big ( \alpha -(2m+1)\varphi\big )\big |\cos^{2m}\varphi\; d\varphi \;.
$$
Using the last estimate and (\ref{Eq_1.4AB}) with $n=2m$ and $\gamma=1$, we find that
$$
{\rm K}_{2m,\infty}> 2{(2m)! \over \pi}\;{(2m-1)!! \over (2m)!!}\;,
$$
which is equivalent to the lower estimate in (\ref{Eq_4.1AB}). 
\end{proof}

In final section we describe some real-part estimates which take the explicit form by combination with
formulas for ${\rm K}_{n, p}$ from (\ref{Eq_M1})-(\ref{Eq_M3}), Theorem \ref{PP_1}, 
Corollaries \ref{CR_1}-\ref{CO_7} and the estimate of Theorem \ref{TR_1}. 

\setcounter{equation}{0}
\section{Explicit estimates for derivatives of analytic functions in domains}

The next two assertions were proved in paper by Kresin and Maz'ya \cite{KM16} .
\setcounter{theorem}{0}

\setcounter{theorem}{0}
\begin{proposition} \label{T_1C} 
Let $\Omega={\mathbb C}\backslash {\overline G}$, where $G$ is a convex domain in ${\mathbb C}$,
and let $f$ be a holomorphic function in $\Omega$ with bounded real part.
Then for any point $z\in \Omega$ the inequality
$$
\big |f^{(n)}(z)\big |\leq {{\rm K}_{n, \infty} \over d_z^n}\sup_\Omega |\Re f|\;,\;\;\;\;\;\;\;\;n=1,2,\dots\;, 
$$
holds with $d_z={\rm dist}\;(z, \partial \Omega)$, where
$$
{\rm K}_{n, \infty}={n! \over \pi}\max_{\beta}\int_{-\pi/2}^{\pi/2} \big |\cos \big (\beta - (n+1)
\varphi \big ) \big |\cos ^{n-1}\varphi\;d\varphi
$$
is the best constant in the inequality
$$
|f^{(n)}(z)| \leq {{\rm K}_{n, \infty} \over (\Im z)^n }\;||\Re f||_\infty
$$
for holomorphic functions $f$ in the half-plane ${\mathbb C}_+$ with the bounded real part.
\end{proposition}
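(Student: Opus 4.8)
The plan is to reduce the estimate in the unbounded domain $\Omega$ to the sharp half-plane estimate that defines ${\rm K}_{n,\infty}$, using the convexity of $G$ to manufacture, for each $z\in\Omega$, a half-plane $H\subseteq\Omega$ whose bounding line lies at distance exactly $d_z$ from $z$. So first I would fix $z\in\Omega$ (there is nothing to prove if $\Omega=\emptyset$). Since $\overline G$ is closed and convex and $z\notin\overline G$, the metric projection $w$ of $z$ onto $\overline G$ exists, lies on $\partial G=\partial\Omega$, and satisfies $|z-w|=\mathrm{dist}(z,\overline G)=d_z$. The variational characterization of the projection gives $\Re\big\{\overline{z-w}\,(u-w)\big\}\le 0$ for every $u\in\overline G$, so $\overline G$ is contained in the closed half-plane $\{u:\Re\{\overline{z-w}\,(u-w)\}\le 0\}$. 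Hence the open half-plane $H=\{u:\Re\{\overline{z-w}\,(u-w)\}>0\}$ is disjoint from $\overline G$, i.e. $H\subseteq\Omega$; it contains $z$, and its bounding line $\partial H$, which passes through $w$ perpendicularly to $z-w$, satisfies $\mathrm{dist}(z,\partial H)=|z-w|=d_z$.

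Next I would transplant the half-plane estimate to $H$. A rotation together with a translation maps $H$ onto ${\mathbb C}_+$; under this map $\Re f|_H$ becomes a bounded harmonic function on ${\mathbb C}_+$, hence equals the Poisson integral of its boundary trace, whose $L^\infty$ norm is at most $\sup_H|\Re f|$. Applying to $f|_H$ the sharp inequality that defines ${\rm K}_{n,\infty}$, and using that the rotation multiplies $f^{(n)}$ by a factor of modulus one and preserves Euclidean distances, I obtain
$$
\big|f^{(n)}(z)\big|\le\frac{{\rm K}_{n,\infty}}{\big(\mathrm{dist}(z,\partial H)\big)^{n}}\,\big\|\Re f|_{\partial H}\big\|_\infty\le\frac{{\rm K}_{n,\infty}}{d_z^{\,n}}\sup_H|\Re f|\le\frac{{\rm K}_{n,\infty}}{d_z^{\,n}}\sup_\Omega|\Re f|,
$$
where the last inequality uses $H\subseteq\Omega$. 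This is the asserted inequality.

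The main obstacle, and the only place where convexity of $G$ enters, is the geometric reduction of the first paragraph: that the nearest-point projection of $z$ onto $\overline G$ yields a half-plane $H\subseteq\Omega$ with $\mathrm{dist}(z,\partial H)=d_z$. Everything else is a change of variables together with the recovery of a bounded harmonic function in a half-plane from its boundary values; one should also note that for $n\ge 1$ the derivative $f^{(n)}(z)$ is unaffected by the purely imaginary additive constant that $\Re f$ leaves undetermined, so the half-plane theorem applies with no further normalization.
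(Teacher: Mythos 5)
Your proof is correct: the nearest-point projection onto the closed convex set $\overline G$ gives a supporting half-plane $H\subseteq\Omega$ with ${\rm dist}(z,\partial H)=d_z$, and a rigid motion then transfers the sharp half-plane estimate, which is exactly the standard reduction. The paper itself gives no proof of this proposition but quotes it from Kresin and Maz'ya \cite{KM16}, where the argument is of the same nature, so your attempt matches the intended approach.
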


\begin{proposition} \label{T_2C} Let $\Omega$ be a domain in ${\mathbb C}$,
and let ${\mathfrak R}(\Omega)$ be the set of holomorphic functions $f$ in $\Omega$
with $\sup_\Omega|\Re f|\leq 1$.
Assume that a point $\zeta \in \partial\Omega$ can be touched by an interior disk $D$. Then
$$
\limsup _{z\rightarrow \zeta} \sup_{f\in {\mathfrak R}(\Omega)}|z-\zeta|^n\big |f^{(n)}(z)\big | \leq 
{\rm K}_{n, \infty}\;,\;\;\;\;\;\;\;\;n=1,2,\dots\;, 
$$
where $z$ is a point of the radius of $D$ directed from the center to $\zeta$.
Here the constant ${\rm K}_{n, \infty}$ is the same as in Proposition $\ref{T_1C}$ and cannot be diminished.
\end{proposition}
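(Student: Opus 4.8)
The plan is to reduce the assertion to the already-known sharp bound on $\mathbb{C}_+$ (the half-plane inequality recalled in Proposition~\ref{T_1C}) by a blow-up/rescaling argument at $\zeta$. First I would normalise the geometry: a rigid motion of $\mathbb{C}$ leaves $\sup|\Re f|$, $|f^{(n)}|$ and all Euclidean distances unchanged when $f$ is precomposed with it, so I may assume $\zeta=0$ and $D=\{w:|w-iR|<R\}$ for some $R>0$. Then $D\subseteq\Omega$, the radius of $D$ from its centre to $\zeta$ consists of the points $z=i\varepsilon$, $\varepsilon\downarrow 0$, and $|z-\zeta|=\varepsilon$ on it.

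For the upper bound, fix $f\in\mathfrak{R}(\Omega)$; since $D\subseteq\Omega$ we have $|\Re f|\le1$ on $D$. Introduce the rescaled functions
$$
g_{f,\varepsilon}(\omega)=f(\varepsilon\omega)-i\,\Im f(i\varepsilon),
$$
holomorphic on $D_\varepsilon:=\varepsilon^{-1}D=\{\omega:|\omega-iR/\varepsilon|<R/\varepsilon\}$, with $|\Re g_{f,\varepsilon}|\le1$ on $D_\varepsilon$, $g_{f,\varepsilon}(i)=\Re f(i\varepsilon)\in[-1,1]$, and $g_{f,\varepsilon}^{(n)}(i)=\varepsilon^n f^{(n)}(i\varepsilon)$, so that $|z-\zeta|^n|f^{(n)}(z)|=|g_{f,\varepsilon}^{(n)}(i)|$ for $z=i\varepsilon$. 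Arguing by contradiction, suppose the $\limsup$ exceeds ${\rm K}_{n,\infty}$; then there are $\varepsilon_j\downarrow0$ and $f_j\in\mathfrak{R}(\Omega)$ with $|g_{f_j,\varepsilon_j}^{(n)}(i)|\ge{\rm K}_{n,\infty}+\delta$ for some $\delta>0$. The disks $D_{\varepsilon_j}$ increase to $\mathbb{C}_+$, so any compact subset of $\mathbb{C}_+$ lies in $D_{\varepsilon_j}$ for $j$ large; together with $|g_{f_j,\varepsilon_j}(i)|\le1$ and $|\Re g_{f_j,\varepsilon_j}|\le1$, the Borel--Carath\'eodory inequality makes $\{g_{f_j,\varepsilon_j}\}$ locally uniformly bounded on $\mathbb{C}_+$, hence a normal family. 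A locally uniformly convergent subsequence has a limit $g$ holomorphic on $\mathbb{C}_+$ with $|\Re g|\le1$ and, by local uniform convergence of derivatives, $|g^{(n)}(i)|\ge{\rm K}_{n,\infty}+\delta$. But $\Re g$ is then a bounded harmonic function on $\mathbb{C}_+$, i.e.\ $\Re g\in h^\infty(\mathbb{R}^2_+)$, so the sharp half-plane estimate of Proposition~\ref{T_1C} gives $|g^{(n)}(i)|\le{\rm K}_{n,\infty}(\Im i)^{-n}\|\Re g\|_\infty\le{\rm K}_{n,\infty}$, a contradiction. Hence $\limsup_{z\to\zeta}\sup_{f\in\mathfrak{R}(\Omega)}|z-\zeta|^n|f^{(n)}(z)|\le{\rm K}_{n,\infty}$.

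For sharpness it suffices to exhibit one configuration in which equality holds. Take $\Omega=\mathbb{C}_+$, $\zeta=0$, $D=\{|w-iR|<R\}$; for $z=i\varepsilon$ on the radius, $|z-\zeta|=\Im z=\varepsilon$, so $|z-\zeta|^n|f^{(n)}(z)|=(\Im z)^n|f^{(n)}(z)|$. Since ${\rm K}_{n,\infty}$ is the best constant in $|f^{(n)}(z)|\le{\rm K}_{n,\infty}(\Im z)^{-n}\|\Re f\|_\infty$ on $\mathbb{C}_+$, for each $\eta>0$ there are a holomorphic $f$ with $\|\Re f\|_\infty\le1$ and a point $z_0\in\mathbb{C}_+$ with $(\Im z_0)^n|f^{(n)}(z_0)|>{\rm K}_{n,\infty}-\eta$; precomposing $f$ with the real translation $w\mapsto w+\Re z_0$ and the dilation $w\mapsto\lambda w$ (both of which preserve $\|\Re f\|_\infty$ and the quantity $(\Im z)^n|f^{(n)}(z)|$ on $\mathbb{C}_+$) moves $z_0$ to $i\Im z_0/\lambda$, an arbitrarily small point of the radius. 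Letting $\eta\downarrow0$ along $\lambda\uparrow\infty$ shows the $\limsup$ equals ${\rm K}_{n,\infty}$ for this $\Omega$, so the constant cannot be diminished.

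The only step requiring real care is the normal-family argument: one must verify that the rescaled disks $\varepsilon^{-1}D$ exhaust $\mathbb{C}_+$ and that $\{g_{f_j,\varepsilon_j}\}$ is locally uniformly bounded (via the Borel--Carath\'eodory inequality on a chain of small disks, using boundedness of $g_{f_j,\varepsilon_j}(i)$ and of $\Re g_{f_j,\varepsilon_j}$), so that the limit $g$ is a genuinely $\mathbb{C}_+$-holomorphic function of bounded real part to which Proposition~\ref{T_1C} applies. Everything else --- the scaling identities, invariance under rigid motions and dilations, and the passage from boundedness of $\Re g$ to $\Re g\in h^\infty(\mathbb{R}^2_+)$ --- is routine. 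A more computational alternative would expand $f^{(n)}(i\varepsilon)$ by the Schwarz--Poisson formula on $D$ and pass to the limit $\varepsilon\downarrow0$ after the substitution that concentrates the kernel at $\zeta$; there the obstacle shifts to uniform control of that concentration in the free phase parameter.
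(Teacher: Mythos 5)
Your argument is correct, and it is worth noting that the paper itself does not prove Proposition \ref{T_2C} at all: both Propositions \ref{T_1C} and \ref{T_2C} are quoted from Kresin and Maz'ya \cite{KM16}, so there is no in-paper proof to match. Your blow-up route is sound in its details: the rescaled disks $\varepsilon^{-1}D$ tangent to ${\mathbb R}$ at $0$ are indeed nested increasing with union ${\mathbb C}_+$, the normalization $g_{f,\varepsilon}(\omega)=f(\varepsilon\omega)-i\Im f(i\varepsilon)$ pins $g_{f,\varepsilon}(i)=\Re f(i\varepsilon)\in[-1,1]$ so that Borel--Carath\'eodory plus a chain of disks gives eventual uniform bounds on each compact subset of ${\mathbb C}_+$, the scaling identity $|z-\zeta|^n|f^{(n)}(z)|=|g^{(n)}_{f,\varepsilon}(i)|$ is right, and the limit function has bounded real part, hence $\Re g\in h^\infty({\mathbb R}^2_+)$ and the sharp half-plane bound of Proposition \ref{T_1C} applies. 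A shorter route to the upper bound, which avoids normal families altogether, is to restrict $f$ to the touching disk $D$, rescale $D$ to the unit disk and apply the sharp disk inequality (\ref{Eq_6.3M}) with $p=\infty$ and $C_{n,\infty}=2^n{\rm K}_{n,\infty}$: since $|z-\zeta|=R(1-r)$ on the radius and $1-r^2=(1-r)(1+r)$, one gets $|z-\zeta|^n|f^{(n)}(z)|\leq C_{n,\infty}/(1+r)^n\rightarrow {\rm K}_{n,\infty}$ as $z\rightarrow\zeta$; your compactness argument buys independence from the disk result, using only the half-plane estimate the paper states. Finally, your reading of ``cannot be diminished'' --- exhibiting one admissible configuration, $\Omega={\mathbb C}_+$, where translation/dilation invariance forces the $\limsup$ to equal ${\rm K}_{n,\infty}$ --- is the only tenable one: equality cannot hold for every $\Omega$ (for instance $\Omega={\mathbb C}\setminus\{\zeta\}$ admits only constants in ${\mathfrak R}(\Omega)$, so the $\limsup$ is $0$), and your half-plane example correctly shows that no smaller constant can serve in the general statement.
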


By (\ref{Eq_M1}) and (\ref{Eq_4.1AB}), the constant ${\rm K}_{n, \infty}$ in Propositions \ref{T_1C} and \ref{T_2C} obeys
the relations
$$
{\rm K}_{2m-1, \infty}={2\over \pi}{\big ((2m-1)!! \big )^2 \over 2m-1}\;,\;\;\;\;\;\;\;\;\;\;\;
{\rm K}_{2m, \infty}<{2m \over 2m-1}\;{2\over \pi}\big ((2m-1)!! \big )^2
$$
for any $m\in {\mathbb N}$. The values of the constants ${\rm K}_{2, \infty}\;$, ${\rm K}_{4, \infty}\;$, 
${\rm K}_{6, \infty}$ and ${\rm K}_{8, \infty}$ in these statements are given by (\ref{Eq_M2}), (\ref{Eq_6.7HO}) 
and (\ref{Eq_6.7HO8}), correspondingly. 

\smallskip
Now we turn to a real-part estimate for derivatives of analytic functions in the disk ${\mathbb D}=\{ z\in {\mathbb C}: |z|<1 \}$.
By $h^p({\mathbb D})$, $1\leq p\leq\infty$, we mean the Hardy space of harmonic functions in the real unit disk
${\mathbb D}$ which are represented by the Poisson integral with a density in $L^p(\partial {\mathbb D})$. 
Below by $||\cdot||_p$ we denote the norm in the space $L^p(\partial {\mathbb D})$.

The inequality, obtained by Khavinson \cite{KHAV} 
$$
|f'(z)|\leq {4 \over \pi(1-r^2)} ||\Re f||_\infty\;,
$$
contains the best possible coefficient in front of $||\Re f||_\infty $, where $r=|z|<1$. 

The next estimate for derivatives of analytic functions with $\Re f \in h^p({\mathbb D})$
\begin{equation} \label{Eq_6.3M}
|f^{(n)}(z)|\leq {C_{n, p} \over (1-r^2)^{n+{1\over p}}} ||\Re f||_p
\end{equation}
was proved in the paper by Kalaj and Elkies \cite{KAEL}.
The representation of the constant $2^{- \left ( n+{1\over p} \right )}C_{n, p}$ in \cite{KAEL} is equivalent 
to the representation (\ref{Eq_0.3M}) with (\ref{Eq_0.1M}) for the sharp constant ${\rm K}_{n, p}$ in inequality (\ref{Eq_0.2M}).
The case $n=1$ in (\ref{Eq_6.3M}) was considered by Kalaj and Markovi\'c \cite{KAMA3}.

\smallskip
The assertion below was established in paper by Kresin \cite{KM17}. 
\begin{proposition} \label{COR_1A} Let $f$ be an analytic function in ${\mathbb D}$ with $\Re f \in h^p({\mathbb D})$. 
The inequality
$$
\sup_{|z|<1}\;\sup _{||\Re f||_p\leq 1}\;(1-|z|^2)^{n+{1\over p}}\;|f^{(n)}(z)|\geq 
2^{n+{1\over p}}{\rm K}_{n, p}
$$
holds, where ${\rm K}_{n, p}$ is the sharp constant in inequality $(\ref{Eq_0.2M})$.
\end{proposition}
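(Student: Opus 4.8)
The plan is to deduce the bound from the already‑known \emph{sharp} half‑plane estimate \eqref{Eq_0.2M}–\eqref{Eq_0.3M} by a conformal transplantation: I would carry near‑extremal functions for ${\rm K}_{n,p}$ from ${\mathbb C}_+$ to ${\mathbb D}$ along conformal maps that collapse onto a boundary point of ${\mathbb D}$, using that near such a point ${\mathbb D}$ looks like ${\mathbb C}_+$ with ${\rm dist}(z,\partial{\mathbb D})$ playing the role of $\Im$. First I would record the invariance of the underlying half‑plane extremal problem under the similarities $w\mapsto\sigma w+\tau$ ($\sigma>0$, $\tau\in{\mathbb R}$): if $F$ satisfies $\Re F\in h^p({\mathbb R}^2_+)$ and is represented by \eqref{Eq_0.1}, then so is $\tilde F(w)=\sigma^{-1/p}F\big(\sigma^{-1}(w-\tau)\big)$, with $\|\Re\tilde F\|_p=\|\Re F\|_p$ and $(\Im w)^{n+1/p}|\tilde F^{(n)}(w)|$ constant along the orbit. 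Since ${\rm K}_{n,p}$ is the sharp constant in \eqref{Eq_0.2M}, for every $\varepsilon>0$ there is such an $F$ with, after normalization, $\|\Re F\|_p=1$ and $|F^{(n)}(i)|=(\Im i)^{n+1/p}|F^{(n)}(i)|>{\rm K}_{n,p}-\varepsilon$.

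Next I would fix, for $r\in(0,1)$, the conformal maps $\omega_r:{\mathbb D}\to{\mathbb C}_+$, $\omega_r(z)=\lambda_r\,\psi(z)$ with $\psi(z)=i(1-z)/(1+z)$ and $\lambda_r=(1+r)/(1-r)$, so that $\omega_r(r)=i$ and $\lambda_r\to\infty$ as $r\to1$, and put $g_r=F\circ\omega_r$. One has to check that $g_r$ is an admissible competitor, i.e. $\Re g_r\in h^p({\mathbb D})$; this follows from the conformal invariance of Poisson integrals together with $\Re F\in h^p({\mathbb R}^2_+)$, the boundary values of $\Re g_r$ being $\Re F\circ\omega_r$ a.e. Two computations then drive the argument. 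For the derivative: by the Fa\`a di Bruno formula $g_r^{(n)}(r)$ is a sum over partitions of $n$, and since $\omega_r^{(\ell)}(r)=\lambda_r\,\psi^{(\ell)}(r)$ with $\psi^{(\ell)}(r)$ tending to a finite nonzero limit for each $\ell\ge1$ (in particular $\psi'(1)=-i/2$), a term with $k$ factors is $O(\lambda_r^{k})$; hence the top‑order term $k=n$ dominates and $g_r^{(n)}(r)=\big(\psi'(1)\big)^{n}\lambda_r^{n}F^{(n)}(i)+O\big(\lambda_r^{n-1}\big)$. For the norm: writing $z=e^{i\theta}$ and $t=\omega_r(e^{i\theta})\in{\mathbb R}$, the boundary change of variables gives $\int_0^{2\pi}|\Re g_r(e^{i\theta})|^p\,d\theta=\int_{-\infty}^{\infty}|\Re F(t)|^p\,\frac{2\lambda_r}{t^2+\lambda_r^2}\,dt$, and since $\lambda_r\cdot 2\lambda_r/(t^2+\lambda_r^2)\to2$ pointwise and is bounded by $2$, dominated convergence yields $\lambda_r\|\Re g_r\|_p^p\to2\|\Re F\|_p^p$ (in the $L^p(\partial{\mathbb D})$ normalization of \cite{KM17}).

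Finally I would assemble these facts: using $(1-r^2)\lambda_r=(1+r)^2\to4$, $|\psi'(1)|^{n}=2^{-n}$, $|F^{(n)}(i)|>{\rm K}_{n,p}-\varepsilon$ and $\|\Re g_r\|_p\sim(2/\lambda_r)^{1/p}$, a direct computation gives
\[
\lim_{r\to1}\frac{(1-r^2)^{\,n+1/p}\,|g_r^{(n)}(r)|}{\|\Re g_r\|_p}=2^{\,n+1/p}\,|F^{(n)}(i)|>2^{\,n+1/p}\big({\rm K}_{n,p}-\varepsilon\big).
\]
Since $g_r/\|\Re g_r\|_p$ is an admissible function in the Proposition (take $z=r$), its double supremum is at least the left‑hand quantity for every $\varepsilon>0$, which proves the stated inequality. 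I expect the main obstacle to be the bookkeeping in this last step: keeping exact track of the powers of $\lambda_r$ and $1-r^2$ through the boundary Jacobian so that the constant comes out as precisely $2^{\,n+1/p}{\rm K}_{n,p}$ — this is also what pins down the normalization of $\|\cdot\|_p$ on $\partial{\mathbb D}$ — together with the two supporting points flagged above: that the top‑order chain‑rule term genuinely dominates, and that $\Re g_r\in h^p({\mathbb D})$, so that $g_r$ is a legitimate competitor.
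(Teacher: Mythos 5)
Your argument is sound, and it is worth noting at the outset that the paper itself contains no proof of this proposition: it is quoted from \cite{KM17}, so there is no in-paper argument to compare with line by line, and your conformal-transplantation (blow-up) proof stands as a correct, self-contained derivation from the half-plane result (\ref{Eq_0.2M})--(\ref{Eq_0.3M}). The key computations check out: the similarity normalization $\tilde F(w)=\sigma^{-1/p}F(\sigma^{-1}(w-\tau))$ does preserve $\|\Re F\|_p$ and the quantity $(\Im z)^{n+1/p}|F^{(n)}(z)|$, so a near-extremizer can be pinned at $z=i$; with $\omega_r=\lambda_r\psi$, $\psi(z)=i(1-z)/(1+z)$, one has $\omega_r(r)=i$ exactly for every $r$ (so no continuity of $F^{(k)}$ is needed), $\psi$ is analytic at $z=1$ so each Fa\`a di Bruno term with $k$ factors is $O(\lambda_r^k)$ and the top term dominates; on the boundary $t=\lambda_r\tan(\theta/2)$ gives $d\theta=2\lambda_r\,dt/(t^2+\lambda_r^2)$, whence $\lambda_r\|\Re g_r\|_p^p\to2\|\Re F\|_p^p$ by dominated convergence; and since $\big[(1-r^2)\lambda_r\big]^n|\psi'(r)|^n=2^n$ identically while $\big[(1-r^2)\lambda_r/2\big]^{1/p}\to2^{1/p}$, the ratio tends to $2^{n+1/p}|F^{(n)}(i)|>2^{n+1/p}({\rm K}_{n,p}-\varepsilon)$, as you state. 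The membership $\Re g_r\in h^p({\mathbb D})$ is justified exactly as you indicate, since the pulled-back boundary density lies in $L^p(\partial{\mathbb D})$ by the same Jacobian bound. The one point that genuinely pins the constant --- and which you correctly flag --- is the normalization: the limit equals $2^{n+1/p}{\rm K}_{n,p}$ precisely when $\|\cdot\|_p$ on $\partial{\mathbb D}$ is taken with respect to arc length $d\theta$ and the half-plane norm with respect to Lebesgue measure on ${\mathbb R}$; this is the convention under which the proposition and the identification $C_{n,p}=2^{n+1/p}{\rm K}_{n,p}$ with the Kalaj--Elkies constant \cite{KAEL} are asserted, so your computation is consistent with the statement. (For $p=\infty$ the Jacobian step is vacuous, but the argument simplifies since $\|\Re g_r\|_\infty=\|\Re F\|_\infty$.)
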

 
Proposition \ref{COR_1A}  together with (\ref{Eq_6.3M}) leads to relation
$$
\sup_{|z|<1}\;\sup _{||\Re f||_p\leq 1}\;(1-|z|^2)^{n+{1\over p}}\;|f^{(n)}(z)|=2^{n+{1\over p}}{\rm K}_{n, p}\;,
$$
which shows that the constant $C_{n, p}=2^{n+{1\over p}}{\rm K}_{n, p}$ 
in estimate (\ref{Eq_6.3M}) cannot be diminished.

The explicit expression for $C_{2m-1, \infty}$ was established by Kalaj and Elkies \cite{KAEL}. 
The formulas for $C_{2m, \infty}=2^{2m}{\rm K}_{2m, \infty}$ with $m=1, 2, 3, 4$ can be obtained by 
(\ref{Eq_M2}), (\ref{Eq_6.7HO}) and (\ref{Eq_6.7HO8}). Other examples of the explicit formulas for the constant $C_{n, p}$ 
in (\ref{Eq_6.3M}) can be derived by relation $C_{n, p}=2^{n+{1\over p}}{\rm K}_{n, p}$
and Theorem \ref{PP_1} as well as Corollaries \ref{CR_1}, \ref{CR_3}.

The next two-sided inequality 
$$
{2^{2m+1}\over \pi}\big ((2m-1)!! \big )^2 
< C_{2m, \infty }< {2m \over 2m-1}\;{2^{2m+1}\over \pi}\big ((2m-1)!! \big )^2
$$
follows from equality $C_{2m, \infty}=2^{2m}{\rm K}_{2m, \infty}$ and estimate (\ref{Eq_4.1AB}). This implies
$$
C_{2m, \infty }\sim {2^{2m+1}\over \pi}\big ((2m-1)!! \big )^2
$$
as $m\rightarrow \infty $.

\end{document}